\def \vx {circle[radius = .15][fill = white]}
\def \bvx {circle[radius = .15][fill = black]}
\tikzstyle{edge}=[very thick]
\tikzstyle{diredge}=[postaction={decorate,decoration={markings,
\newcommand{\defPt}[3]{
	\def \pt {(#1, #2)}
	\coordinate [at = \pt, name = #3];
}
   \def\MR#1{}
\theoremstyle{plain}
\newtheorem*{thm*}{Theorem}
\newtheorem{thm}{Theorem}
\Crefname{thm}{Theorem}{Theorems}
\numberwithin{thm}{section}
\newtheorem*{lem*}{Lemma}
\newtheorem{lem}[thm]{Lemma}
\Crefname{lem}{Lemma}{Lemmas}
\newtheorem*{claim*}{Claim}
\crefname{claim}{Claim}{Claims}
\Crefname{claim}{Claim}{Claims}
\newtheorem*{defn*}{Definition}
\Crefname{defn*}{Definition}{Definitions}
\Crefname{prop}{Proposition}{Propositions}
\crefname{cor}{Corollary}{Corollaries}
\crefname{conj}{Conjecture}{Conjectures}
\newtheorem{qn}[thm]{Question}
\Crefname{qn}{Question}{Questions}
\newtheorem{obs}[thm]{Observation}
\Crefname{obs}{Observation}{Observations}
\Crefname{ex}{Example}{Examples}
\theoremstyle{definition}
\Crefname{prob}{Problem}{Problems}
\newtheorem{defn}[thm]{Definition}
\Crefname{defn}{Definition}{Definitions}
\theoremstyle{remark}
\renewenvironment{proof}[1][]{\begin{trivlist}
\item[\hspace{\labelsep}{\bf\noindent Proof#1.\/}] }{\qed\end{trivlist}}
\newcommand{\remove}[1]{}
\newcommand{\floor}[1]{
    \left \lfloor #1 \right \rfloor
}
\def\expandafter\normalsize\expandafter{%
    \normalsize
    \setlength\abovedisplayskip{4pt}
    \setlength\belowdisplayskip{4pt}
    \setlength\abovedisplayshortskip{4pt}
    \setlength\belowdisplayshortskip{4pt}
}
\newmdenv[
  topline=false,
  bottomline=false,
  skipabove=\topsep,
  skipbelow=\topsep
]{siderules}
\DeclareMathOperator{\LP}{LP} 
\DeclareMathOperator{\LMS}{LMS} 
\DeclareMathOperator{\towr}{towr}
\newcommand{\ML}{F}
\DeclareMathOperator{\bR}{\mathbb{R}}
\newcommand{\bZ}{\mathbb{Z}}
\newcommand{\N}{\mathbb{N}}
\newcommand{\R}{\mathbb{R}}
\newcommand{\vect}[1]{\boldsymbol{#1}}
\title{Erd\H{o}s-Szekeres theorem for multidimensional arrays}
\date{}
\author{
Matija Buci\'c\thanks{School of Mathematics, Institute for Advanced Study and Department of Mathematics, Princeton University, 304 Washington Road, 08544 NJ, USA. Email: \href{mailto:matija.bucic@ias.edu} {\nolinkurl{matija.bucic@ias.edu}}.}
\and
Benny Sudakov\thanks{Department of Mathematics, ETH, Z\"urich, Switzerland. Email:
\href{mailto:benjamin.sudakov@math.ethz.ch} {\nolinkurl{benjamin.sudakov@math.ethz.ch}}.
Research supported in part by SNSF grant 200021\_196965.}
\and
Tuan Tran\thanks{Discrete Mathematics Group, Institute for Basic Science (IBS), Daejeon, Republic of Korea. Email:
\href{mailto:tuantran@ibs.re.kr} {\nolinkurl{tuantran@ibs.re.kr}}.
This work was supported by the Institute for Basic Science (IBS-R029-Y1).}
}
\begin{document}

\maketitle
\vspace{-0.5cm}
\begin{abstract}
The classical Erd\H{o}s-Szekeres theorem dating back almost a hundred years states that any sequence of $(n-1)^2+1$ distinct real numbers contains a monotone subsequence of length $n$. This theorem has been  
generalised to higher dimensions in a variety of ways
but perhaps the most natural one was proposed by Fishburn and Graham more than 25 years ago. They 
defined the concept of a monotone and a lex-monotone array and 
asked how large an array one needs in order to be able to find a monotone or a lex-monotone subarray of size $n \times \ldots \times n$. Fishburn and Graham 
obtained Ackerman-type bounds in both cases. We significantly improve these results. Regardless of the dimension we obtain at most a triple exponential bound in $n$ in the monotone case and a quadruple exponential one in the lex-monotone case. 
\end{abstract}

\section{Introduction}

A classical paper of Erd\H{o}s and Szekeres \cite{ES35} from 1935 is one of the starting points of a very rich discipline within combinatorics: Ramsey theory. A main result of the paper, which has become known as the Erd\H{o}s-Szekeres theorem, says that any sequence of $(n-1)^2+1$ distinct real numbers contains either an increasing or decreasing subsequence of length $n$, and this is tight. Among simple results in combinatorics, only few can compete with this one in terms of beauty and utility. See, for example, Steele \cite{Ste95} for a collection of six proofs and some applications.

A very natural question which arises is how does one
generalise the Erd\H{o}s-Szekeres theorem to higher dimensions? The main concept which does not have an obvious generalisation is that of the monotonicity of a subsequence. Several candidates have been proposed \cite{Kru53,Mor69,BM73-I,BM73-II,Kal73,Sid99,ST01, LS18} but perhaps the most natural one was introduced more than 25 years ago by Fishburn and Graham \cite{FG93}. A multidimensional array is said to be monotone if for each dimension all the 1-dimensional subarrays along the direction of this dimension are increasing or are all decreasing. To be more formal, a $d$-dimensional array $f$ is an injective function from $A_1\times \ldots \times A_d$ to $\mathbb{R}$ where $A_1,\ldots,A_d$ are non-empty subsets of $\mathbb{Z}$; we say $f$ has size $|A_1| \times \ldots \times |A_d|$.

\begin{defn*}[Monotone array]
 A $d$-dimensional array $f\colon A_1\times \ldots \times A_d \rightarrow \mathbb{R}$ is monotone if for each $i\in [d]$ one of the following alternatives occurs:
\begin{itemize}
    \item[\rm (i)] $f(a_1,\ldots,a_{i-1},x,a_{i+1},\ldots,a_d)$ is increasing in $x$ for all choices of $a_1,\ldots, a_{i-1},a_{i+1},\ldots, a_d$;
    \item[\rm (ii)] $f(a_1,\ldots,a_{i-1},x,a_{i+1},\ldots,a_d)$ is decreasing in $x$ for all choices of $a_1,\ldots, a_{i-1},a_{i+1},\ldots, a_d$.
\end{itemize}
\end{defn*}
For example, of the following 2-dimensional arrays first and second are monotone, while the third is not (since some rows contain increasing and some rows decreasing sequences).  

\begin{center}
\begin{tabular}{ | c | c | c | c c | c | c | c |c c | c | c | c |}
 \cline{1-3}\cline{6-8} \cline{11-13}
 7 & 8 & 9 & & & 1 & 3 & 6 & & & 7 & 8 & 9\\ \cline{1-3}\cline{6-8}\cline{11-13} 
 4 & 5 & 6 & & & 2 & 5 & 7 & & & 6 & 5 & 4\\  \cline{1-3}\cline{6-8}\cline{11-13}
 1 & 2 & 3 & & & 4 & 8 & 9 & & & 1 & 2 & 3\\ \cline{1-3}\cline{6-8} \cline{11-13}
\end{tabular}
\end{center}

The higher dimensional version of the Erd\H{o}s-Szekeres problem introduced by Fishburn and Graham \cite{FG93} now becomes: given positive integers $d$ and $n$, determine the smallest $N$ such that any $d$-dimensional array of size $N\times \ldots \times N$ contains a monotone $d$-dimensional subarray of size $n\times \ldots \times n$, we denote this $N$ by $M_d(n)$. The Erd\H{o}s-Szekeres theorem can now be rephrased as $M_1(n)=(n-1)^2+1$. Fishburn and Graham \cite[Section 3]{FG93} showed that $M_2(n) \le \towr_5(O(n))$\footnote{We define the tower function $\towr_k(x)$ by $\towr_1(x)=x$ and $\towr_{k}(x)=2^{\towr_{k-1}(x)}$ for $k\ge 2$.}, that $M_3(n)$ is bounded by a tower of height at least a tower in $n$ and that $M_d(n)$ is bounded from above by an Ackermann-type\footnote{The Ackermann function $A_k$ of order $k$ is defined recursively by $A_k(1)=2, A_1(n)=2n$ and $A_k(n)=A_{k-1}(A_k(n-1))$. It is an incredibly fast growing function, for example $A_2(n)=2^n$, $A_3(n)=\towr_n(2)$ and $A_4(n)$ is a tower of height tower of height tower, iterated $n$ times, of $2$.} function of order at least $d$ for $d\ge 4$.
We significantly improve upon these results. 
\begin{thm}\label{thm:monotone}
\textcolor{white}{ }
\begin{enumerate}[label=(\roman*), ref=(\roman*)]
    \item\label{itm:2d} $M_2(n) \le 2^{2^{(2+o(1))n}}$,
    \item\label{itm:3d} $M_3(n) \le 2^{2^{(2+o(1))n^2}}$ and
     \item\label{itm:4+d} $M_d(n)\le 2^{2^{2^{O_d(n^{d-1})}}}$, for $d \ge 4$. 
\end{enumerate}
\end{thm}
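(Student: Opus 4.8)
The plan is to prove the planar bound first, as the engine, and then bootstrap to all $d$, being progressively more careful so that the tower height does not grow with $d$.

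\emph{Set-up.} A subarray on which $f$ is monotone is precisely a subarray on which $f$ is strictly increasing for \emph{some} orthant order $\le_\varepsilon$, $\varepsilon\in\{+,-\}^d$ (reverse the coordinates along which $f$ decreases). The sign $\varepsilon_i$ is always \emph{discovered} rather than chosen: whenever one passes to a subarray on which every line in the direction of coordinate $i$ is monotone, the Erd\H{o}s--Szekeres theorem simply hands over one of the two directions, so the factor $2^d$ is never paid. Moreover, the property ``all lines in direction $i$ are monotone in one common direction'' survives passing to further subarrays. So every argument below has the same skeleton: regularise coordinate $1$; inside the result, regularise coordinate $2$; and so on.

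\emph{Planar case.} The engine is a regularisation lemma: every $2$-dimensional array of size $a\times b$ contains a subarray of size $m\times m$, with $m$ of order roughly $\log\min\{a,b\}$, in which all rows are monotone in one common direction. Given this, one application turns an $N\times N$ array into a row-monotone array of size roughly $(\log N)\times(\log N)$; a second application, now regularising columns (which leaves the rows monotone, so the output is genuinely monotone), gives a monotone subarray of size roughly $(\log\log N)\times(\log\log N)$, and solving for $n$ yields the bound for $d=2$. To prove the lemma I would build the common column set $C$ and the surviving row set $R$ hand in hand, maintaining the invariant that $C$ is an increasing (resp.\ decreasing) subsequence in every row of $R$: at each step append a column that a constant proportion of the rows of $R$ ``agree upward'' with and shrink $R$ to those rows; if no column does, a short double count shows the complementary alternative is forced on a constant proportion of $R$ and one flips the direction of $C$. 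Encoding the column--column comparisons as a $2$-colouring of $\binom{[b]}{2}$ and feeding the colour classes through the Erd\H{o}s--Szekeres theorem is what makes the accounting close and fixes the constants; this is the step I expect to be the real work, and extracting the sharp leading constant $2+o(1)$ will require running the two regularisations in a coupled way so that lower-order losses are absorbed rather than multiplied.

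\emph{Higher dimensions.} Following the skeleton blindly would regularise $d$ coordinates at a cost of $d$ logarithms and produce a tower of height $d$, so the whole point is to import the ``extra'' coordinates cheaply. For $d=3$ one fuses two of the three coordinate-regularisations: after regularising coordinate $1$ one is left with a large $2$-dimensional array of axis-$1$-monotone fibres, and reading coordinates $2$ and $3$ in lexicographic order and applying a planar-type regularisation with target $\sim n^2$ (the surplus being exactly what is needed to recover an honest $n\times n$ grid from a lexicographically monotone family) deals with both at the cost of a single extra logarithm; this loses only two logarithms overall and leaves the third coordinate visible only as the $n^2$, giving the bound for $d=3$. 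For $d\ge 4$ the fusion no longer suffices and one genuinely pays a third logarithm: fold the last $d-1$ coordinates into one via the lexicographic order, apply the planar result to the resulting $N\times N^{d-1}$ array, and then run one Erd\H{o}s--Szekeres-type pass per folded coordinate to turn the lexicographically monotone ``super-columns'' that come out back into a $(d-1)$-dimensional monotone sub-grid; the three exponentials are the two planar regularisations plus this unfolding, and the $d-1$ unfolding passes account for the $n^{d-1}$ in the top exponent, giving the bound for $d\ge 4$. The main obstacle, for $d=2$ directly and for $d\ge 3$ in its ``simultaneous'' form (handling all the surplus coordinates with only $O(1)$ logarithmic loss and with the correct leading constant), is this regularisation step; once it is in hand, composing the logarithms and tracking the polynomial Erd\H{o}s--Szekeres losses is routine.
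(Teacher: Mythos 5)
Your planar sketch is recognisably the same engine as the paper's: two Erd\H{o}s--Szekeres regularisations, one per coordinate, each exchanging a coordinate length for a logarithm, giving $\log\log N \approx n$. The paper's actual implementation (Theorem~2.3) is a bit lopsided rather than ``$m\times m$'': the first step turns an $N\times N$ array into a $2n\times M'$ array with all columns increasing by pigeonholing over the $\binom{4n^2}{2n}$ possible locations of a length-$2n$ monotone subsequence, and only the second step (iterated Erd\H{o}s--Szekeres over the $2n$ rows, halving the exponent each time) costs the second logarithm. But this is a presentational difference, not a gap.

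The $d\ge 3$ parts have a genuine gap, and it is precisely at the step you flag as ``fusing'' coordinates. Folding coordinates $2,\dots,d$ lexicographically into a single axis of size $N^{d-1}$ and applying a planar regularisation hands you back a subset $S\subseteq [N]^{d-1}$ of the folded axis, \emph{not} a $(d-1)$-dimensional subgrid. A lex-monotone family of $n^2$ (or even $n^{d-1}$) points in $[N]^{d-1}$ need not contain any $n\times\dots\times n$ grid --- all the points could lie on a single line, for instance --- so there is nothing for the ``Erd\H{o}s--Szekeres-type unfolding passes'' to act on; recovering a combinatorial box from a sparse subset of a product is a Zarankiewicz-type problem and is not accessible via chain arguments. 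Your surplus $n^2$ is therefore not ``exactly what is needed''; it is nowhere near enough, since the planar regularisation returns a set of size only about $\log N$, which is tiny compared to what any Zarankiewicz bound would demand.

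The paper avoids this by never collapsing the grid structure. For $d\ge 4$ it proves a recursive bound for the weaker \emph{inconsistently monotone} target (Theorem~2.5): within each slice $[m]^{d-1}\times\{h\}$ it finds, by induction, a located $n^{d-1}$-subgrid, then pigeonholes over $\binom{m}{n}^{d-1}$ possible locations so that many heights $h$ \emph{share} the same subgrid, and only then iterates Erd\H{o}s--Szekeres over the heights. Crucially the ``sharing the same location'' pigeonhole is what preserves the box structure; this is what your folding throws away. One then upgrades from inconsistent to genuine monotonicity by colouring each point with its local sign pattern ($2^d$ colours) and invoking a grid-Ramsey lemma (Lemma~2.2), which costs the third exponential. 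For $d=3$ (Theorem~2.7), the paper gets two exponentials by a different trick: it applies the asymmetric 2D bound to each layer, pigeonholes so many layers share a $2n\times n2^{2n}$ monotone subarray with the same sign pattern, then compares pairs of layers via the grid-Ramsey lemma (Lemma~2.1) and feeds the resulting edge-colouring of $K_{|S_3|}$ into the Erd\H{o}s--Rado multicolour Ramsey theorem. If you want to salvage your outline, the missing ingredient is a grid-Ramsey / box-Zarankiewicz step to keep subgrids located; once you see that, you will also see why $d=3$ permits two exponentials but $d\ge 4$ appears to need three.
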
 

Fishburn and Graham introduced another very natural generalisation of the notion of monotonicity of a sequence to higher dimensional arrays. A multidimensional array is said to be \textit{lexicographic} if for any two entries the one which has the larger position in the first coordinate in which they differ is larger. For example, the following array is lexicographic:

\begin{center}
\begin{tabular}{ | c | c | c | }
 \cline{1-3}
 3 & 6 & 9 \\ \hline
 2 & 5 & 8 \\ \hline
 1 & 4 & 7 \\ \hline
\end{tabular}
\end{center}

An array is said to be \textit{lex-monotone} if it is possible to permute the coordinates and reflect the array along some dimensions to obtain a lexicographic array. To be more formal, for two vectors $\vect{u}=(u_1,\ldots,u_d)$ and $\vect{v}=(v_1,\ldots,v_d)$ in $\R^d$, we write $\vect{u}<_{\mbox{lex}} \vect{v}$ if $u_i<v_i$, where $i$ is the smallest index such that $u_i\neq v_i$.
\begin{defn*}[Lex-monotone array]
A $d$-dimensional array $f$ is said to be lex-monotone if there exist a permutation $\sigma\colon [d]\rightarrow [d]$ and a sign vector $\vect{s} \in \{-1,1\}^d$ such that 
\[
f(\vect{x})<f(\vect{y}) \Leftrightarrow (s_{\sigma(1)}x_{\sigma(1)},\ldots,s_{\sigma(d)} x_{\sigma(d)})<_{\hbox{lex}} (s_{\sigma(1)}y_{\sigma(1)},\ldots,s_{\sigma(d)}y_{\sigma(d)}).
\]
\end{defn*}
Note that a 1-dimensional array is lex-monotone if and only if it is a monotone sequence. The following 
2-dimensional arrays are lex-monotone since for the first one the above matrix is obtained by swapping the coordinates, for the second one by reflecting along the first dimension and for the third by performing both of these operations.

\begin{center}
\begin{tabular}{ | c | c | c | c c | c | c | c |c c | c | c | c |}
 \cline{1-3}\cline{6-8} \cline{11-13}
 7 & 8 & 9 & & & 9 & 6 & 3 & & & 9 & 8 & 7\\ \cline{1-3}\cline{6-8}\cline{11-13} 
 4 & 5 & 6 & & & 8 & 5 & 2 & & & 6 & 5 & 4\\  \cline{1-3}\cline{6-8}\cline{11-13}
 1 & 2 & 3 & & & 7 & 4 & 1 & & & 3 & 2 & 1\\ \cline{1-3}\cline{6-8} \cline{11-13}
\end{tabular}
\end{center}

Given positive integers $d$ and $n$, let $L_d(n)$ denote the minimum $N$ such that for any $d$-dimensional array of size $N\times \ldots \times N$, one can find a lex-monotone subarray of size $n\times \ldots \times n$. Fishburn and Graham \cite[Theorem 1]{FG93} showed that $L_d(n)$  exists. This result has been used to prove interesting results in poset dimension theory \cite{FFT99} and computational complexity theory \cite{BK10}.

Note that any lex-monotone array is monotone, so a very natural strategy to bound $L_d(n)$ is to first find a monotone subarray and then within this subarray find a lex-monotone subarray. This motivates the following problem which is of independent interest. For positive integers $d$ and $n$, we define $\ML_d(n)$ to be the minimum $N$ such that any $d$-dimensional monotone array of size $N\times \ldots \times N$, contains a lex-monotone subarray of size $n\times \ldots \times n$. It is easy to see by the above reasoning that $L_d(n) \le M_d(\ML_d(n))$. Fishburn and Graham \cite[Lemma 1]{FG93} showed 
$\ML_2(n) \le 2n^2-5n+4$ and $\ML_3(n) \le 2^{2n+o(n)}$, while for $d\ge 4$ their argument gives $\ML_d(n) \le \towr_{d-1}(O_d(n))$. We determine $\ML_2(n)$ completely and significantly improve 
the bound for all $d\ge 3$.
\begin{thm}\label{thm:mon-to-lex}
\textcolor{white}{ }
\begin{enumerate}[label=(\roman*), ref=(\roman*)]
    \item $\ML_2(n) = 2n^2-5n+4$ and
    \item $\ML_d(n) \le 2^{O_d(n^{d-2})}$, for $d \ge 3$.
\end{enumerate}
\end{thm}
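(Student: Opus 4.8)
In every part we first reflect the array along a suitable set of coordinates so that the given monotone array $f$ becomes increasing in every coordinate; then any lex-monotone subarray of $f$ is again increasing in every coordinate, hence is lexicographic after merely \emph{permuting} the coordinates (no reflections). Unwinding this, a subarray of $f$ on a box $\prod_{i=1}^{d}B_i$ with $|B_i|=n$ is lexicographic with coordinate order $(1,\dots,d)$ exactly when, for every $i\in[d]$, every prefix $(a_1,\dots,a_{i-1})\in B_1\times\dots\times B_{i-1}$, and every pair $b<b'$ of consecutive elements of $B_i$,
\[
 f(a_1,\dots,a_{i-1},b,\max B_{i+1},\dots,\max B_d)\;<\;f(a_1,\dots,a_{i-1},b',\min B_{i+1},\dots,\min B_d),
\]
i.e.\ the ``tail boxes'' indexed by consecutive values of coordinate $i$ occupy disjoint, increasing intervals of values. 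The whole problem is to find such a nested, value-separated structure.

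\textbf{Part (i).}
The upper bound $\ML_2(n)\le 2n^2-5n+4$ is due to Fishburn and Graham \cite[Lemma~1]{FG93}, so it remains to exhibit, for $N=(n-1)(2n-3)=2n^2-5n+3$, an array that is increasing in both coordinates and contains no $n\times n$ lex-monotone subarray --- equivalently, no $n\times n$ subarray that is \emph{row-separated} or \emph{column-separated} (the chosen rows, resp.\ columns, spanning pairwise disjoint increasing value-intervals). The plan is to build $f$ --- concretely, as the rank order of the cells under a total order that blends the two coordinates linearly with a lower-order ``staircase'' correction --- so that fitting $n$ value-separated rows forces a long stretch of the index set $[N]$ to be used up, and symmetrically for columns, with the two demands balanced so that neither can be met once $N$ exceeds $2n^2-5n+3$. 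Quantitatively, summing the $n-1$ separation inequalities along a hypothetical subarray reduces each obstruction to a single inequality among the design parameters; optimising these yields $\ML_2(n)=(n-1)(2n-3)+1=2n^2-5n+4$.

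\textbf{Part (ii).}
We induct on $d$ through a dimension-reduction step: a monotone $d$-dimensional array of side $N$ with $N\ge \ML_{d-1}(n)^{(1+o(1))(d-1)n}$ contains a lex-monotone $n\times\dots\times n$ subarray, the base case being the sharper bound $\ML_3(n)\le 2^{(1+o(1))n}$ (which uses that $\ML_2(n)$ is only polynomial, so $\log_2\ML_2(n)=o(n)$ contributes nothing to the leading term). Granting this, unrolling gives $\log_2\ML_d(n)\le(1+o(1))\bigl(\prod_{k=4}^{d}(k-1)n\bigr)\log_2\ML_3(n)=\bigl(\tfrac12(d-1)!+o(1)\bigr)n^{d-2}$, which is the claim with $c_d=\tfrac12(d-1)!$. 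The mechanism of the reduction is to promote one coordinate, say coordinate $1$, to be outermost: we look for values $a_1<\dots<a_n$ of coordinate $1$ such that the slabs $f(a_1,\cdot),\dots,f(a_n,\cdot)$, restricted to one common box $B_2\times\dots\times B_d$, span disjoint increasing value-intervals \emph{and} are all lexicographic with one and the same coordinate order. Choosing the $a_t$ so that the slabs' value-ranges are disjoint is a cheap Erd\H{o}s--Szekeres-type selection; the content is the common-box requirement, which is where the induction hypothesis enters, since each slab is individually a monotone $(d-1)$-dimensional array and therefore contains an $n^{d-1}$ lex-monotone subarray.

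The main obstacle --- which I expect to be the crux --- is that these $(d-1)$-dimensional subarrays lie in different boxes for different slabs, so handling the slabs one at a time would apply $\ML_{d-1}^{-1}$ once per slab, for $n$ slabs, producing a bound iterated $n$ times, i.e.\ of tower type; this is precisely the inefficiency in the Fishburn--Graham argument. The way around it is to exploit that the slabs are \emph{pointwise} increasing, $f(a_t,\cdot)<f(a_{t+1},\cdot)$ (because $f$ increases in coordinate $1$), and to control the relevant ``tail-box value-intervals'' of all slabs at once. Equivalently, one may recast the reduction as passing from a monotone real-valued array in $d$ coordinates to a monotone \emph{interval}-valued array in $d-1$ coordinates and iterating this down to a one-dimensional, interval version of Erd\H{o}s--Szekeres; then each lowering of the dimension costs only a polynomial factor --- the factor $(d-1)n$ in the exponent --- rather than an exponential. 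Making the ``all slabs at once'' move precise while losing only a $(1+o(1))$ factor per level, and tracking how the candidate sets of the inner coordinates shrink, is the technical heart of the argument.
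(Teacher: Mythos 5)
Both parts of your proposal have genuine gaps; neither is carried out far enough to constitute a proof, and in part (ii) a key step is mischaracterised as easy when it is actually the crux.

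\textbf{Part (i).} You correctly observe that only the lower bound is new and that it requires exhibiting an increasing array on $[2n^2-5n+3]^2$ with no $n\times n$ lex-monotone subarray. But you never construct it: ``a total order that blends the two coordinates linearly with a lower-order staircase correction'' and ``summing the $n-1$ separation inequalities \ldots optimising these yields'' are descriptions of a hoped-for proof, not a proof. The paper's construction is concrete and somewhat delicate: two building-block arrays (Lemmas~\ref{lem:block1} and~\ref{lem:block2}) of sizes $(n-1)(n-2)\times(n-1)^2$ and $(n-1)^2\times(n-1)(n-2)$, each internally a stack of lexicographic strips, are assembled cyclically into five regions $\vect A_1<\dots<\vect A_5$, and then ruling out both types of $n\times n$ subarray requires a careful case analysis (inequalities \eqref{eq:1}--\eqref{eq:17}). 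Without specifying the construction and verifying those constraints, the lower bound is not proved.

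\textbf{Part (ii).} Two concrete issues. First, you call the step of choosing $a_1<\dots<a_n$ so that the restricted slabs span disjoint value-intervals ``a cheap Erd\H{o}s--Szekeres-type selection.'' It is not. Pointwise $f(a_t,\cdot)<f(a_{t+1},\cdot)$ is automatic from monotonicity, but what is actually needed is $\max f(\vect A_j)<\min f(\vect A_k)$ for $j<k$ on a common box, and you cannot simply declare coordinate $1$ to be the outer one: which coordinate admits such a range-separated stack depends on $f$. This is precisely the content of the paper's ``dominant coordinate'' Lemma~\ref{lem:reduction}, which partitions $[d^2mt]^d$ into translates of $[dm]^d$, places $d$ carefully chosen corner-points $\vect x_1,\dots,\vect x_d$ in each translate, colours translates by which $\vect x_i$ carries the largest value of $f$, and pigeonholes to find a column of translates with a repeated colour $i$; the geometry of the $\vect x_i$ (equation~\eqref{eq:min-max}) then forces range separation along dimension $i$. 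Second, your derivation of the base case is incorrect: you claim $\ML_3(n)\le 2^{(1+o(1))n}$ follows because $\ML_2(n)$ is only polynomial, but plugging $\ML_2(n)=\Theta(n^2)$ into the generic recursion $\ML_d(n)\lesssim \ML_{d-1}(n)^{(d-1)n}$ gives $2^{\Theta(n\log n)}$, not $2^{(1+o(1))n}$; since $\log_2\ML_2(n)=\Theta(\log n)$ is \emph{not} $o(1)$, the $\log n$ does not vanish. The paper therefore runs a separate, finer argument for $d=3$ (red/blue colouring of $n\times n$ subblocks inside each layer, then a double pigeonhole over rows/columns and over layers) to obtain the sharper $\ML_3(n)\le 2^{n+o(n)}$ that the induction actually needs as its base. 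Finally, the ``interval-valued array'' reformulation you float as an alternative is not substantiated and is not what the paper does; as written, it does not resolve either of the two gaps above.
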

Part (i) of Theorem \ref{thm:mon-to-lex} answers a question of Fishburn and Graham asking whether $\ML_2(n)=(1+o(1))n^2$, in negative. 
Combining Theorems \ref{thm:monotone} and \ref{thm:mon-to-lex} with the inequality $L_d(n) \le M_d(\ML_d(n))$ gives the following upper bounds on $L_d(n)$.

\begin{thm}\label{thm:lexicographic}
\textcolor{white}{ }
\begin{enumerate}[label=(\roman*), ref=(\roman*)]
    \item $L_2(n) \le 2^{2^{(4+o(1))n^2}}$,
    \item $L_3(n)\le 2^{2^{2^{(2+o(1))n}}}$ and
    \item $L_d(n) \le \towr_5\left(O_d(n^{d-2})\right)$, for $d \ge 4$.
\end{enumerate}
\end{thm}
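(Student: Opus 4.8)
The plan is to combine the inequality $L_d(n) \le M_d(\ML_d(n))$ from the introduction with the bounds on $M_d$ and $\ML_d$ established in Theorems \ref{thm:monotone} and \ref{thm:mon-to-lex}. Recall why that inequality holds: setting $N = M_d(\ML_d(n))$, any $d$-dimensional array of size $N \times \cdots \times N$ contains a monotone subarray of size $\ML_d(n) \times \cdots \times \ML_d(n)$, which in turn contains a lex-monotone subarray of size $n \times \cdots \times n$, and a subarray of a subarray is a subarray. Since $M_d$ is non-decreasing in its argument, it suffices to substitute the upper bounds on $\ML_d$ into the upper bounds on $M_d$ and simplify the resulting nested exponentials.

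For (i), Theorem \ref{thm:mon-to-lex}(i) gives $\ML_2(n) = 2n^2 - 5n + 4$, and plugging this into $M_2(m) \le 2^{2^{(2+o(1))m}}$ from Theorem \ref{thm:monotone}\ref{itm:2d} gives $L_2(n) \le 2^{2^{(2+o(1))(2n^2-5n+4)}} = 2^{2^{(4+o(1))n^2}}$. For (ii), Theorem \ref{thm:mon-to-lex}(ii) with $d=3$ has $c_3 = \tfrac12\cdot 2! = 1$, so $\ML_3(n) \le 2^{(1+o(1))n}$; feeding $m = \ML_3(n)$ into $M_3(m) \le 2^{2^{(2+o(1))m^2}}$ from Theorem \ref{thm:monotone}\ref{itm:3d} gives $m^2 \le 2^{(2+o(1))n}$, hence $(2+o(1))m^2 \le 2^{(2+o(1))n}$ (the extra multiplicative constant contributes only $O(1) = o(n)$ to the exponent), so $L_3(n) \le 2^{2^{2^{(2+o(1))n}}}$. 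For (iii), Theorem \ref{thm:mon-to-lex}(ii) gives $\ML_d(n) \le 2^{(c_d+o(1))n^{d-2}}$, and with $m = \ML_d(n)$ the bound $M_d(m) \le 2^{2^{2^{(1+o(1))m^{d-1}}}}$ from Theorem \ref{thm:monotone}\ref{itm:4+d} yields $m^{d-1} \le 2^{((d-1)c_d + o(1))n^{d-2}}$, hence $(1+o(1))m^{d-1} = 2^{O_d(n^{d-2})}$ and therefore $L_d(n) \le \towr_5(O_d(n^{d-2}))$.

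There is no real obstacle: this theorem is an immediate corollary of the two main results. The only point requiring a little care is the asymptotic bookkeeping --- tracking how the $o(1)$ terms and the multiplicative constants sitting in the outer exponential layers propagate when the inner layer is itself exponential in $n$, and, in (iii), noting that the dimension-dependent constants $c_d$ and the factor $d-1$ coming from raising to the $(d-1)$-st power are absorbed into $O_d(\cdot)$ rather than tracked explicitly.
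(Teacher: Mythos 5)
Your proposal is correct and takes essentially the same route as the paper, which proves this theorem simply by citing the inequality $L_d(n) \le M_d(\ML_d(n))$ and plugging in the bounds from Theorems \ref{thm:monotone} and \ref{thm:mon-to-lex}. The asymptotic bookkeeping you carry out (e.g.\ absorbing the outer constant $2+o(1)$ into the $o(1)$ of the inner exponent in part (ii), and absorbing $d-1$ and $c_d$ into $O_d(\cdot)$ in part (iii)) is exactly what is implicitly required and is done correctly.
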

For comparison, the best lower bound on $L_d(n)$, due to Fishburn and Graham \cite[Theorem 2]{FG93}, is $L_d(n) \ge n^{(1-1/d)n^{d-1}}$ for all $d\ge 2, n \ge 3$, achieved by taking a random array.  

\vspace{1ex}
\noindent\textbf{Notation and organisation.}
The rest of the paper is organised as follows. We prove  \Cref{thm:monotone} in \Cref{sec:monotone} and \Cref{thm:mon-to-lex} in Section \ref{sec:lexicographic}. The final section contains some concluding remarks and open problems.

We use standard set-theoretic and asymptotic notation throughout the paper. We write $[n]$ for $\{1,2,\ldots,n\}$. For sequences $a(n)$ and $b(n)$ we write $a(n)=O(b(n))$ to mean there is a constant $C$ such that $|a(n)| \le C|b(n)|$ for all $n \in \N$, and we write $a(n)=o(b(n))$ to mean that $a(n)/b(n) \rightarrow 0$ as $n\rightarrow \infty$. For the purpose of asymptotics we always treat $d$, the number of dimensions, as a constant while taking $n\rightarrow \infty$. Given $d\in \N$, we denote the set of all permutations of $[d]$ by $\mathfrak{S}_d$. For real numbers $\alpha$ and $\beta$, we employ the interval notation
\[
[\alpha,\beta]:= \{x \in \bZ: \alpha \le x \le \beta \}.
\]
A set of the form $A_1\times \ldots \times A_d$, where $A_i$ is a finite subset of $\mathbb{Z}$ for each $i \in [d]$, is called an $|A_1|\times \ldots \times |A_d|$ grid or a grid of size $|A_1|\times \ldots \times |A_d|$. Note that a $d$-dimensional array $f\colon A_1\times \ldots \times A_d \rightarrow \bR$ is equivalent to an ordering of the vertices of the $d$-dimensional grid $A_1\times \ldots \times A_d$; we switch between these points of view interchangeably.

We generally use lowercase bolded letters for vectors and uppercase bolded letters for grids. For a vector $\vect{u}$ we denote by $u_i$ the value of the $i$-th coordinate.
\section{Monotone arrays}
\label{sec:monotone}
 
In this section we will prove \Cref{thm:monotone}.
We begin with a few preliminaries.

\subsection{Preliminaries}\label{subsec:prelim}
We collect here two well-known Ramsey-type results for grids. We mention first a relation between the grid Ramsey problem and the (hyper)graph Zarankiewicz problem which offers an alternative perspective. Given $d,n_1,\ldots,n_d \in \N$, let $K^{(d)}_{n_1,\ldots,n_d}$ denote the complete $d$-uniform $d$-partite hypergraph on with parts $V_1=[n_1], \ldots, V_d=[n_d]$. Edges of $K^{(d)}_{n_1,\ldots,n_d}$ correspond in the obvious way to vertices of the $d$-dimensional grid $[n_1]\times \ldots \times [n_d]$. Subhypergraphs of $K^{(d)}_{n_1,\ldots,n_d}$ of the form $K^{(d)}_{m_1,\ldots,m_d}$ then correspond to subgrids of $[n_1]\times \ldots \times [n_d]$ of size $m_1\times \ldots \times m_d$. Using this correspondence, the following \Cref{lem:grid-2d,lem:grid} follow, for example, from \cite[Theorem 2]{Fur96} and \cite[Theorem 4]{CFS09}, respectively. 

\begin{lem}\label{lem:grid-2d}
Given $k,n,t \in \N$ in any vertex $k$-colouring of an $tk \binom{nk}{n} \times kn$ grid there is a monochromatic subgrid of size $t \times n$.
\end{lem}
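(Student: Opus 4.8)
The plan is a two-step pigeonhole argument: first collapse each row to a monochromatic block on $n$ of the columns, then find $t$ rows that produce the same block.

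First I would view the grid as having $R := tk\binom{nk}{n}$ rows (indexed by the first coordinate) and $C := kn$ columns (indexed by the second), and a fixed $k$-colouring of its vertices. Consider a single row: it has $kn$ vertices, each receiving one of $k$ colours, so by pigeonhole some colour class inside that row has size at least $n$. Hence to every row $r$ we may associate a colour $\chi(r)\in[k]$ together with a set $P(r)$ of $n$ columns such that every vertex of row $r$ in a column of $P(r)$ has colour $\chi(r)$.

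Next I would count the number of possible values of the pair $(\chi(r),P(r))$: there are $k$ choices for the colour and $\binom{kn}{n}=\binom{nk}{n}$ choices for the $n$-element set of columns, so at most $k\binom{nk}{n}$ possibilities in total. Since the number of rows is $R=tk\binom{nk}{n}=t\cdot k\binom{nk}{n}$, the pigeonhole principle gives a set $T$ of at least $t$ rows on which $(\chi(r),P(r))$ is constant, equal to some pair $(c,Q)$. Restricting the grid to the $t$ rows of $T$ and the $n$ columns of $Q$ yields a subgrid of size $t\times n$ all of whose vertices have colour $c$, which is exactly the required monochromatic subgrid.

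Since the whole argument is just iterated pigeonhole, there is no genuine obstacle; the only thing needing care is the bookkeeping that makes the two steps fit the stated dimensions — using exactly $kn$ columns so that each row is guaranteed a monochromatic block of $n$ columns, and then taking $t$ copies of each of the $k\binom{nk}{n}$ possible (colour, column-set) patterns so that $tk\binom{nk}{n}$ rows suffice.
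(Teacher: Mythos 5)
Your proof is correct and follows essentially the same two-step pigeonhole argument as the paper: extract a monochromatic $n$-point block from each row, then pigeonhole over the $k\binom{nk}{n}$ possible (colour, column-set) patterns to find $t$ rows sharing the same one. No discrepancies.
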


In the higher dimensional case we will not need an asymmetric variant.
\begin{lem}\label{lem:grid}
Given integers $d,k \ge 2$ there exists a positive constant $C=C(d,k)$ such that for any positive integers $n$ and $N$ with $N \ge 2^{Cn^{d-1}}$, in any $k$-colouring of the $d$-dimensional $N \times \ldots \times N$ grid there is a monochromatic subgrid of size $n \times \ldots \times n$.
\end{lem}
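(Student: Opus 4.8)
The plan is to prove the statement by induction on $d$, exhibiting an explicit admissible constant: I will show one may take $C(2,k)=2k$ and $C(d,k)=k+1+(d-1)C(d-1,k)$ for $d\ge 3$, which is an honest constant depending only on $d$ and $k$.

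For the base case $d=2$ I would simply invoke \Cref{lem:grid-2d} with $t=n$: every $k$-coloured $nk\binom{nk}{n}\times kn$ grid contains a monochromatic $n\times n$ subgrid. Since $nk\binom{nk}{n}\le nk\cdot 2^{nk}\le 2^{2kn}$ and $2^{2kn}\ge kn$, any $N\times N$ grid with $N\ge 2^{2kn}$ contains such a subgrid (take subsets of the appropriate sizes in the two coordinates and apply the lemma), so $C(2,k)=2k$ works.

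For the inductive step ($d\ge 3$) set $m:=2^{C(d-1,k)n^{d-2}}$, so by the inductive hypothesis every $k$-coloured $(d-1)$-dimensional grid of side length $m$ contains a monochromatic subgrid of size $n\times\cdots\times n$. Given a $k$-colouring of $[N]^d$ with $N\ge 2^{C(d,k)n^{d-1}}$ (which is in particular $\ge m$), restrict attention to $[N]\times[m]^{d-1}$ and view it as the sequence of $N$ slices $S_i:=\{i\}\times[m]^{d-1}$, $i\in[N]$, each a $k$-coloured $(d-1)$-dimensional grid of side $m$. Inside each $S_i$ fix one monochromatic $n\times\cdots\times n$ subgrid and record the pair consisting of its colour together with its position, i.e.\ the tuple of $n$-element subsets of $[m]$ used in coordinates $2,\dots,d$. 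There are at most $k\binom{m}{n}^{d-1}\le k\,m^{n(d-1)}$ possible (colour, position) records, so provided $N\ge n\,k\,m^{n(d-1)}$, by pigeonhole at least $n$ of the slices $S_{i_1},\dots,S_{i_n}$ carry the same record $(c,\,A_2\times\cdots\times A_d)$. Then every point of $\{i_1,\dots,i_n\}\times A_2\times\cdots\times A_d$ lies in the monochromatic (colour $c$) subgrid of its own slice, so this $n\times\cdots\times n$ subgrid is monochromatic. It remains to check the numerics: $\log_2\!\bigl(n\,k\,m^{n(d-1)}\bigr)=\log_2(nk)+n(d-1)\log_2 m\le (k+1)n^{d-1}+(d-1)C(d-1,k)\,n^{d-1}=C(d,k)\,n^{d-1}$ with the claimed $C(d,k)$, which closes the induction.

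The one genuine subtlety is controlling the number of colours. The naive reduction, in which one fixes the last coordinate to a set of about $kn$ values and then $k^{kn}$-colours the $(d-1)$-dimensional grid by the induced colour patterns, already produces a tower-type bound for $d=3$ and is hopeless for the target $2^{C(d,k)n^{d-1}}$. Pigeonholing instead over (colour, position) pairs of the monochromatic subgrids produced slice-by-slice via the induction keeps the colour count fixed at $k$, so that taking logarithms multiplies the bound by only a factor of order $n$ per dimension — exactly what is needed. Everything else is routine pigeonhole together with the estimates $\binom{m}{n}\le m^n$ and $xk\le 2^{xk}$.
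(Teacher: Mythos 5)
Your proposal is correct and follows essentially the same route as the paper: induct on $d$, slice $[N]\times[m]^{d-1}$ into the $N$ layers $\{i\}\times[m]^{d-1}$, apply the induction hypothesis inside each layer to get a monochromatic $n\times\cdots\times n$ subgrid per layer, and then pigeonhole over the $k\binom{m}{n}^{d-1}$ possible (colour, position) records to find $n$ layers sharing the same record. The only differences are cosmetic choices of the constants (the paper takes $C(2,k)=2\log_2(ek)$ and $C(d,k)=2(d-1)C(d-1,k)$; you take $C(2,k)=2k$ and $C(d,k)=k+1+(d-1)C(d-1,k)$), and both verifications of the numerics go through.
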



\subsection{Proofs of the main results on monotonicity}
We begin with the 2-dimensional case. We will actually prove 
a stronger version of \Cref{thm:monotone} \ref{itm:2d} as we will need it for the 3-dimensional case.
\begin{thm}
\label{thm:monotone-asym-2d}
For every $n,t \in \N$, any $4n^2 \times (2t)^{2^{2n}}$ array contains an $n \times t$ monotone subarray. 
\end{thm}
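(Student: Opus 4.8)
The plan is to process the array one column at a time, maintaining for each row a compact "fingerprint" that records the monotone behaviour we have committed to so far, and then to apply the pigeonhole/Ramsey-type machinery of \Cref{lem:grid-2d} to find many rows and many columns that behave coherently. The target size of the array, $4n^2 \times (2t)^{2^{2n}}$, strongly suggests the following split: one factor of $4n^2 = (2n)^2$ comes from a single application of the classical one-dimensional Erd\H{o}s--Szekeres theorem in the row direction (each row of length $4n^2$ contains a monotone subsequence of length $2n$), and the doubly-exponential factor $(2t)^{2^{2n}}$ in the column direction comes from iterating a "halving" or "refinement" argument $2n$ times, each step costing an exponential.

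\textbf{Key steps.} First I would fix, in every row, a monotone subsequence of length $2n$: by Erd\H{o}s--Szekeres each of the $4n^2 = (2n-1)^2 + (2n-1)$ entries in a row yields such a subsequence, recording its $2n$ column-positions (one of $\binom{4n^2}{2n}$ choices) and its direction (increasing/decreasing). Applying \Cref{lem:grid-2d} with $k=2$ colours (the direction) we could hope to pass to a subgrid where all rows use the same $2n$ columns with the same direction; however the bound in \Cref{lem:grid-2d} is too weak to be applied naively to $\binom{4n^2}{2n}$ positions, so instead I would peel the $2n$ columns off one at a time. The cleaner route: argue by induction on $t$ (or on the number of "stages"), where at each stage we have a set of columns forming a monotone pattern in each surviving row, we look at the next coordinate, split the rows according to the sign of the comparison between the new column-value and the previous ones, and by pigeonhole keep a $1/2^{O(n)}$ fraction; after $2n$ stages we have located, within each surviving row, a genuinely monotone block of length $n$, all blocks sitting in the same $n$ columns with the same direction, and then we still have $\ge t$ rows because $(2t)^{2^{2n}} / 2^{O(n) \cdot 2^{2n}} \ge t$. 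The $t \times n$ subarray on these rows and columns, possibly after also fixing the row-direction by one more factor-2 pigeonhole, is monotone.

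\textbf{Main obstacle.} The delicate point is bookkeeping the column direction so that the final subarray is monotone in \emph{both} directions simultaneously, not just along rows. Finding a length-$n$ monotone subsequence in every row is the easy part; the difficulty is that these subsequences, read down a fixed column, need not be monotone. The resolution is that after we have fixed the $n$ columns $c_1 < \dots < c_n$ and restricted to rows agreeing on them, we get an auxiliary $(\text{many}) \times n$ array whose rows are all monotone in the same direction, and we must now find $t$ rows that are pairwise "consistently ordered" in each of the $n$ columns; this is exactly an $n$-fold iterated pigeonhole (or an application of Dilworth/Erd\H{o}s--Szekeres in each column), and controlling its cost is what forces the $2^{2n}$ in the exponent — each of the $\le 2n$ columns we track can double the number of row-classes via the comparison sign, giving $2^{2n}$ classes and hence the stated $(2t)^{2^{2n}}$ starting height. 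I expect verifying that these two phases compose without losing monotonicity, and that the quantitative losses multiply out to exactly $4n^2 \times (2t)^{2^{2n}}$, to be the part requiring the most care.
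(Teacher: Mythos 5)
Your first phase is the same as the paper's: apply the $1$-dimensional Erd\H{o}s--Szekeres theorem along the short direction to get a length-$2n$ monotone subsequence in each of the long lines, then pigeonhole over the $2\binom{4n^2}{2n}$ possible (position, direction) fingerprints. You worry unnecessarily that this pigeonhole is too lossy; in fact $2\binom{4n^2}{2n} \le 2^{O(n\log n)}$ is negligible against $(2t)^{2^{2n}}/t^{2^{2n}} = 2^{2^{2n}}$, and the paper applies it directly to obtain a $2n \times M'$ subarray with $M' \ge t^{2^{2n}}$ on which every long line is increasing. That part of your sketch would go through.

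The genuine gap is in the second phase, and it concerns exactly the mechanism that produces the double exponential. Your ``peel one coordinate at a time, split by a comparison sign, keep a $1/2^{O(n)}$ fraction'' step loses only a constant factor (or $2^{O(n)}$) per stage, so after $2n$ stages it costs $2^{O(n^2)}$ and would never require the starting size $(2t)^{2^{2n}}$; conversely the inequality $(2t)^{2^{2n}}/2^{O(n)\cdot 2^{2n}} \ge t$ that you use to close the argument is simply false for large $n$ and small $t$ (the left side tends to $0$). Likewise ``each of the $\le 2n$ tracked columns doubles the number of row-classes, giving $2^{2n}$ classes'' would only account for a multiplicative factor of $2^{2n}$, not an exponent of $2^{2n}$. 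What is actually needed (and what the paper does) is to run Erd\H{o}s--Szekeres again, but now along the $2n$ \emph{short} lines of the refined $2n \times M'$ subarray: each such application selects a monotone subsequence of the columns and hence only retains a \emph{square root} of the remaining columns, so after $2n$ iterations one keeps $(M')^{1/2^{2n}} \ge t$ columns. It is this iterated square-root loss, not any halving or sign-splitting, that forces the $2^{2n}$ in the exponent. After the $2n$ iterations all short lines are monotone (some increasing, some decreasing) and all long lines are increasing, so one final pigeonhole picks $n$ of the $2n$ short lines with a common direction, yielding the $n\times t$ monotone subarray. You gesture at ``Erd\H{o}s--Szekeres in each column'' as an alternative, but you never carry out this square-root accounting, and the computations you do write down are inconsistent with it.
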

\begin{proof}
Let $f$ be an array indexed by $[N]\times [M]$, where $N=4n^2$ and $M=(2t)^{2^{2n}}$.
By Erd\H{o}s-Szekeres Theorem we know that in each column of $f$ there is a monotone subsequence of length $2n$. The entries of this subsequence can appear in $\binom{4n^2}{2n}$ different positions so there must be a set $R\subseteq [N]$ of $2n$ positions for which at least $M/\binom{4n^2}{2n}$ columns are monotone when restricted to (rows) $R$. We take $C\subseteq [N]$ to be the subset of these columns for which the restriction is increasing, we may w.l.o.g. assume that $C$ consists of at least half of these columns. We obtain a subarray $f'=f|_{R \times C}$ which is increasing in each column and has size $2n \times M'$ where $M' \ge \frac{M}{2\binom{4n^2}{2n}}\ge t^{2^{2n}}$.

By applying Erd\H{o}s-Szekeres Theorem to the sequence given by the first row of $f'$, we can find a subset $C_1 \subseteq C$ of size $|C_1| \ge \sqrt{|C|}$ such that the first row of $f'|_{R \times C_1}$ is monotone. Repeating this argument at step $i$ we find a subset $C_i \subseteq C_{i-1}$ of size $|C_i| \ge \sqrt{|C_{i-1}|} \ge |C|^{1/2^i}$ such that the first $i$ rows of $f'|_{R \times C_i}$ are monotone. Continuing this process until $i=2n$ we obtain an $2n \times t$ array with each row being either increasing or decreasing. By taking the ones of the type which appears more often we obtain a monotone $n \times t$ subarray as claimed.
\end{proof}

One can easily generalise the above proof to give a bound of the form $M_d(n)\le \towr_{d+1}(O(n^{d-1}))$ for any $d\ge 2$, which would already be a substantial improvement over the Ackermann bound due to Fishburn and Graham \cite{FG93}. However, to prove the desired significantly better bound $M_d(n)\le \towr_4(O(n^{d-1}))$ for $d\ge 4$, we need to consider an intermediate problem which we find interesting in its own right.

\begin{defn*}[Inconsistently monotone array]
 An array $f\colon A_1\times \ldots \times A_d \rightarrow \mathbb{R}$ is inconsistently monotone if for each $i\in [d]$, $f(a_1,\ldots,a_{i-1},x,a_{i+1},\ldots,a_d)$ is monotone in $x$ for all choices of $a_1,\ldots, a_{i-1},a_{i+1},\ldots, a_d.$ 
\end{defn*}

Main difference compared with the definition of a monotone array is that we do not require all the lines along a fixed dimension to be all increasing or all decreasing but allow some to be increasing and some to be decreasing. For positive integers $d$ and $n$, let $M'_d(n)$ denote the minimum $N$ such that for any $d$-dimensional array of size $N\times \ldots \times N$, one can find a $d$-dimensional inconsistently monotone subarray of size $n\times \ldots \times n$. We have $M'_1(n)=(n-1)^2+1$ according to Erd\H{o}s-Szekeres Theorem. When $d \ge 2$ we obtain the following version of \Cref{thm:monotone} which gives stronger bounds but only guarantees us an inconsistently monotone array, a weaker notion compared to actual monotone arrays.

\begin{thm}
\label{thm:non-consistent-monotone}
For every $d \ge 2$, we have $M'_d(n) \le 
2^{2^{(1+o(1))n^{d-1}}}$.
\end{thm}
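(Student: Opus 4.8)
The plan is to induct on $d$, with the base case $d=2$ following from the Erdős–Szekeres argument (indeed from \Cref{thm:monotone-asym-2d}, which gives even more). For the inductive step, suppose we have a good bound on $M'_{d-1}(\cdot)$. Let $f$ be a $d$-dimensional array on $[N]\times\ldots\times[N]$ with $N = 2^{2^{(1+o(1))n^{d-1}}}$. The natural first move is to slice along the last coordinate: write $f = f_1, \ldots, f_N$ where $f_j \colon [N]^{d-1} \to \R$ is the $j$-th slice. Applying the induction hypothesis to each slice gives, inside $f_j$, a $(d-1)$-dimensional inconsistently monotone subarray on some grid $\vect{T}_j$ of size $n' \times \ldots \times n'$, where $n'$ will be chosen somewhat larger than $n$ (we need room to later also control the last direction, roughly $n' \approx 2n$ or $n' \approx n^2$). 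Since there are only $\binom{N}{n'}^{d-1} \le N^{(d-1)n'}$ possible grids $\vect{T}_j$, and since $N / N^{(d-1)n'}$ is still enormous (here is where the extra exponential room in $N$ is consumed), we can pass to a large set $J \subseteq [N]$ of slice-indices all sharing the \emph{same} grid $\vect{T} = A_1 \times \ldots \times A_{d-1}$. Restricting to $\vect{T} \times J$ we obtain a $d$-dimensional array, call it $g$, each of whose last-coordinate slices is inconsistently monotone on the common $(d-1)$-dimensional grid $\vect{T}$, with $|J|$ still doubly exponential in $n^{d-1}$.

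The second phase is to clean up the last-coordinate lines of $g$. For each fixed $\vect{a} \in \vect{T}$, the line $g(\vect{a}, \cdot)$ over $J$ contains, by Erdős–Szekeres, a monotone subsequence of length $n$ supported on some $n$-subset of $J$; but we need the \emph{same} $n$-subset to work simultaneously for all $|\vect{T}| = (n')^{d-1}$ choices of $\vect{a}$. This is the same type of difficulty handled in the proof of \Cref{thm:monotone-asym-2d}: iterate the "extract a monotone sub-line, shrinking the column set by a square root" step once for each $\vect{a} \in \vect{T}$. After $(n')^{d-1}$ rounds, starting from $|J|$ and ending at $|J|^{1/2^{(n')^{d-1}}} \ge n$, we are left with an $n$-subset $A_d \subseteq J$ on which every last-coordinate line is monotone. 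On the grid $A_1 \times \ldots \times A_{d-1} \times A_d$ — well, not quite: we only know the $(d-1)$-dimensional slices were inconsistently monotone on $\vect{T}$, and $A_i$ has size $n'$, not $n$. So restricting the first $d-1$ coordinates to any $n$-subsets preserves inconsistent monotonicity of those slices; combined with monotonicity of all the last-direction lines on $A_d$, the full $n \times \ldots \times n$ subarray is inconsistently monotone in every one of its $d$ directions.

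For the bookkeeping of constants: the doubly-exponential form is stable under exactly this recursion — at each step we lose a single exponential's worth to the $N^{(d-1)n'}$ pigeonhole and a bounded tower to the iterated square-root, so $M'_d(n) \le 2^{2^{cn^{d-1}}}$ propagates from $M'_{d-1}(n') \le 2^{2^{cn^{d-2}}}$ provided $n'$ is polynomial in $n$ of degree at most $1$ per dimension increase; tracking this carefully is what yields the $(1+o(1))$ in the exponent rather than a worse constant. The main obstacle, as in the two-dimensional argument, is precisely the second phase: one cannot find a single monotone-line-inducing set for all $(n')^{d-1}$ base points "all at once," and the square-root iteration is what forces the second exponential (the first exponential being forced already by pigeonholing over the $(d-1)$-dimensional grids $\vect{T}_j$). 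Verifying that these two exponential losses do not compound into a third is the crux, and it works because the iterated square-root over a doubly-exponential quantity $|J|$ still leaves something doubly exponential, as long as the number of rounds $(n')^{d-1}$ sits in the \emph{exponent of the exponent} — which it does.
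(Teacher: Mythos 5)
Your overall architecture matches the paper's: slice along the last coordinate, apply the induction hypothesis to each slice, pigeonhole to a common subgrid, then iterate Erd\H{o}s--Szekeres with square-root losses along the final direction. The paper establishes exactly the recursion $M'_d(n) \le \binom{M'_{d-1}(n)}{n}^{d-1}\, n^{2^{n^{d-1}}}$ this way, and your two-phase description of the argument and of why the bound stays at tower height three is on target. (Also, your $n'$ can simply be $n$; nothing in the argument requires slack in the first $d-1$ dimensions, and at the end you concede as much.)

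However, there is a genuine gap in the pigeonhole step. You take $f$ on the full symmetric grid $[N]^d$, apply the induction hypothesis to each $(d-1)$-dimensional slice $f_j\colon [N]^{d-1}\to\R$, and then claim that since there are only $\binom{N}{n'}^{d-1}\le N^{(d-1)n'}$ possible locations for $\vect{T}_j$, the quantity $N/N^{(d-1)n'}$ is ``still enormous.'' It is not: $N/N^{(d-1)n'}=N^{1-(d-1)n'}<1$ whenever $(d-1)n'\ge 1$, i.e.\ always. The number of candidate positions for an $n'\times\ldots\times n'$ subgrid inside $[N]^{d-1}$ dwarfs $N$ itself, so the pigeonhole produces an empty set $J$, and everything downstream collapses. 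The iterated square-root phase (phase two) is fine as described; it is phase one that fails.

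The fix, which is what the paper does, is to \emph{not} work with the symmetric box. Before slicing, restrict the first $d-1$ coordinates to a grid of side length only $m=M'_{d-1}(n)$, keeping the last coordinate at full length $N$, i.e.\ work inside $[m]^{d-1}\times[N]$. Now each slice lives in $[m]^{d-1}$, so the induction hypothesis still applies (it is exactly tight), but the number of candidate positions drops to $\binom{m}{n}^{d-1}\le m^{(d-1)n}$. Since $m$ is doubly exponential only in $n^{d-2}$ while $N$ is doubly exponential in $n^{d-1}$, the pigeonhole now leaves $|J|\ge N/\binom{m}{n}^{d-1}\ge n^{2^{n^{d-1}}}$, which is doubly exponential as you need, and the iterated square root over $n^{d-1}$ rounds brings this down to $\ge n$. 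The moral is that the inductive step must exploit asymmetry: the side length budget you can afford in the ``old'' $d-1$ dimensions is a whole exponential smaller than what you need in the new one, and that discrepancy is precisely what makes the pigeonhole feasible.
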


\begin{proof}
We will prove the following recursive bound
\begin{equation}\label{eq:monotone-recursive}
M'_d(n) \le \binom{M'_{d-1}(n)}{n}^{d-1} n^{2^{n^{d-1}}}. 
\
\end{equation}

Let $m=M'_{d-1}(n)$ and $N=\binom{m}{n}^{d-1} n^{2^{n^{d-1}}}$. To prove \eqref{eq:monotone-recursive}, let $f$ be an array indexed by $[m]^{d-1}\times [N]$. For each ``height'' $h \in [N]$, consider the restriction of $f$ to $[m]^{d-1}\times \{h\}$. As $m=M'_{d-1}(n)$, there exist an $n\times \ldots \times n$ subgrid $\vect{S}_h$ of $[m]^{d-1}$ such that $f$ is inconsistently monotone on $\vect{S}_h\times \{h\}$. Given $h \in [N]$, there are at most $\binom{m}{n}^{d-1}$ possibilities for the location of $\vect{S}_h$. Hence, by the pigeonhole principle, we can find an $n\times \ldots \times n$ subgrid $\vect{S}$ of $[m]^{d-1}$ and a subset $H \subset [N]$ of size
\[ 
|H| \ge \frac{N}{\binom{m}{n}^{d-1}}= n^{2^{n^{d-1}}}
\]
such that $f$ is inconsistently monotone on $\vect{S}\times \{h\}$ for every $h \in H$. Let us denote the elements of $\vect{S}$ by $s_1,\ldots, s_{n^{d-1}}$. By Erd\H{o}s-Szekeres Theorem, we can construct a nested sequence $H_0:=H \supseteq H_1 \supseteq \ldots \supseteq H_{n^{d-1}}$ such that $|H_i| \ge \sqrt{|H_{i-1}|}$ for every $i\ge 1$, and that $\{s_j\} \times H_i$ is monotone for $j=1,\ldots,i$. In particular, we have that 
$|H_{n^{d-1}}| \ge |H|^{1/2^{n^{d-1}}} \ge n$, and that the restriction of $f$ to $\vect{S}\times H_{n^{d-1}}$ is inconsistently monotone. This completes the proof of \eqref{eq:monotone-recursive}.  

What remains to be shown is that \eqref{eq:monotone-recursive} implies the desired bound $M'_d(n) \le \towr_3((1+o(1))n^{d-1})$. We proceed by induction on $d$, noting that the case $d=2$ follows from \eqref{eq:monotone-recursive} and the fact that $M'_1(n)=(n-1)^2+1$. For the induction step, assuming $d\ge 3$. Using \eqref{eq:monotone-recursive} and the induction hypothesis we find
\begin{align*}
M'_d(n) &\le M'_{d-1}(n)^{(d-1)n} \cdot n^{2^{n^{d-1}}}\\
& \le \towr_3(O(n^{d-2})) \cdot \towr_3((1+o(1))n^{d-1})\\
&=\towr_3((1+o(1))n^{d-1}),
\end{align*}
finishing the proof.
\end{proof}

The following definition is going to help us find a monotone array inside an inconsistently monotone array.

\begin{defn}[Monotonicity pattern]
Let $f\colon \vect{A} \rightarrow \mathbb{R}$ be an inconsistently monotone $d$-dimensional array. Let $\vect{a}=(a_1,\ldots,a_d) \in \vect{A}$. For each $i\in [d]$, let $s_i \in \{-1,1\}$ be such that $s_if(a_1,\ldots,a_{i-1},x,a_{i+1},\ldots,a_d)$ is increasing in $x$. The vector $\vect{s}=(s_1,\ldots,s_d)$ is called the \emph{monotonicity pattern} of $f$ at point $\vect{a}$. 
\end{defn}

Notice that if $f$ is a monotone array then $f$ has the same monotonicity pattern at all points, in which case we just call it the monotonicity pattern of $f$. We now use Theorem \ref{thm:non-consistent-monotone} to prove Part \ref{itm:4+d} of \Cref{thm:monotone}.
\begin{thm}\label{thm:monotone-anyd}
For every $d \ge 4$, we have $M_d(n) \le \towr_4\left(O(n^{d-1})\right)$.
\end{thm}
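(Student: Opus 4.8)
The plan is to reduce the monotone problem to the inconsistently monotone problem solved in Theorem \ref{thm:non-consistent-monotone}, paying only a bounded (in fact, singly exponential) overhead. First I would apply Theorem \ref{thm:non-consistent-monotone} to find, inside the given $N \times \cdots \times N$ array with $N = \towr_4((1+o(1))n^{d-1})$, an inconsistently monotone subarray $g$ of size $m \times \cdots \times m$ where $m$ is chosen large enough — roughly $m = \towr_3((1+o(1))n^{d-1})$ suffices, since $M'_d(m') \le 2^{2^{(1+o(1))(m')^{d-1}}}$ and we want $m' $ such that the double exponential of $(m')^{d-1}$ still fits under $\towr_4(\cdot)$. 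Concretely $N = \towr_4((1+o(1))n^{d-1})$ gives us an inconsistently monotone subarray whose side length $m$ is a full $\towr_3$ of a polynomial in $n$, which is the room we will need.

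The heart of the argument is to upgrade "inconsistently monotone" to "monotone" using the monotonicity pattern. Inside $g$, every line along direction $i$ is monotone, but its direction (increasing vs.\ decreasing) may depend on the other coordinates. I would proceed dimension by dimension: fix direction $1$ and look at the sign $s_1(\vect{a}) \in \{-1,1\}$ attached to each point $\vect{a}$; this is really a $2$-colouring of the grid $[m]^{d-1}$ (it only depends on the $d-1$ coordinates other than the first, once we know a single line along direction $1$ is monotone — and in fact one must be slightly careful and first pass to a subgrid where $s_1$ depends only on those $d-1$ coordinates, which costs another application of a Ramsey-type argument, or one observes directly that along a single line the pattern is constant). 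Then apply Lemma \ref{lem:grid} to this $2$-colouring of the $(d-1)$-dimensional grid to find a monochromatic subgrid of side length $m_1$ with $m \ge 2^{C m_1^{d-2}}$; on the corresponding subarray, all lines in direction $1$ point the same way. Repeat for directions $2, \ldots, d$: at each step we have a $2$-colouring of a grid of dimension at most $d$ (really $d-1$ relevant coordinates per step), so each step costs one exponential via Lemma \ref{lem:grid}, shrinking the side length from $\ell$ to roughly $(\log_2 \ell)^{1/(d-2)}$ up to constants. After all $d$ steps the resulting subarray has a globally fixed monotonicity pattern, hence is monotone; and we need the final side length to be at least $n$.

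The bookkeeping then is: starting from side length $m = \towr_3((1+o(1))n^{d-1})$, applying Lemma \ref{lem:grid} a bounded number $d$ of times (each a single $\log$, up to a $(d-2)$-th root and constants depending only on $d$) still leaves side length comfortably above $n$ — the point being that one $\towr_3$ of a polynomial absorbs $O_d(1)$ logarithms with room to spare. Tracing this back, the original array needs side length $N = \towr_4((1+o(1))n^{d-1})$: one extra exponential on top of the $\towr_3$ bound from Theorem \ref{thm:non-consistent-monotone}, because $M'_d$ of a $\towr_3$ quantity is a $\towr_4$ quantity. I expect the main obstacle to be the step where we want the sign $s_i$ to depend on only the appropriate coordinates so that it genuinely defines a colouring of a lower-dimensional grid to which Lemma \ref{lem:grid} applies; handling this cleanly — either by the observation that monotonicity direction is constant along any fixed line and then colouring the $(d-1)$-dimensional "base" grid, or by an auxiliary Ramsey reduction — is where the care lies, but it does not affect the tower height since it is again just one colouring step of bounded dimension.
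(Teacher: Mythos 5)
Your high-level plan — reduce to an inconsistently monotone subarray via \Cref{thm:non-consistent-monotone}, then upgrade to monotone by Ramsey on monotonicity patterns using \Cref{lem:grid} — is exactly the paper's strategy. But the execution has a quantitative gap that breaks the claimed tower height, in two related places.

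First, the bookkeeping for the inconsistently monotone step is off by one exponential. From $N = \towr_4\bigl((1+o(1))n^{d-1}\bigr) = 2^{2^{2^{(1+o(1))n^{d-1}}}}$ and the bound $M'_d(m) \le 2^{2^{(1+o(1))m^{d-1}}}$, the largest $m$ you can guarantee satisfies $2^{(1+o(1))n^{d-1}} \ge (1+o(1))m^{d-1}$, i.e.\ $m$ is only \emph{singly} exponential in $n^{d-1}$, roughly $2^{\Theta_d(n^{d-1})}$. It is not a full $\towr_3$; to get $m$ of size $\towr_3(\mathrm{poly}(n))$ you would have to start from $N$ of size $\towr_5$, not $\towr_4$.

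Second, and more fundamentally, the dimension-by-dimension iteration is too expensive. Each application of \Cref{lem:grid} to a $2$-colouring replaces a side length $\ell$ by roughly $(\log_2\ell)^{1/(d-2)}$, i.e.\ it costs one logarithm. Doing this $d$ times costs $d$ nested logarithms, and no fixed-height tower absorbs $d$ logarithms and still leaves you above $n$: even $\towr_3(n^{d-1})$ drops below $n$ after four logarithms, let alone $d$. Unwinding the recursion $\ell_{i} \ge 2^{C\ell_{i+1}^{d-2}}$, you would need $m \ge \towr_{d}\bigl(\Omega_d(n^{d-2})\bigr)$ to survive $d$ iterations, pushing the final bound to $\towr_{d+2}$, far above $\towr_4$.

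The fix, which is what the paper does, is to apply \Cref{lem:grid} \emph{once}: colour each point of the inconsistently monotone subarray by its entire monotonicity pattern $\vect{s}(\vect{a})\in\{-1,1\}^d$, giving a $2^d$-colouring of a $d$-dimensional grid. One application of \Cref{lem:grid} with $k=2^d$ yields a monochromatic $n\times\cdots\times n$ subgrid, on which the pattern is constant, hence the subarray is monotone. That single application needs $m\ge 2^{C(d,2^d)n^{d-1}}$, i.e.\ $m$ singly exponential in $n^{d-1}$, which is exactly the size you can afford from a $\towr_4$ starting array. The cost of $2^d$ rather than $2$ colours is only a change in the constant $C(d,k)$, which disappears into the $O_d(\cdot)$, whereas iterating costs one exponential per dimension.
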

\begin{proof}
Let $N=\towr_4\left(O(n^{d-1})\right)$, and let $C=C(d,2^d)$ be the positive constant given by \Cref{lem:grid}. It follows from \Cref{thm:non-consistent-monotone} that in any $d$-dimensional array of size $N\times \ldots \times N$, one can find an inconsistently monotone subarray $f$ indexed by $\vect{A}=A_1\times \ldots \times A_d$ such that $|A_1|=\ldots = |A_d|=2^{C n^{d-1}}.$ 

Let us colour every point in $\vect{A}$ with the monotonicity pattern of $f$ at this point. 
This gives us a vertex-colouring of $\vect{A}$ with $2^d$ colours given by $\{-1,1\}^d$.
By \Cref{lem:grid} and the choice of $C$, there exists a monochromatic $n \times \ldots \times n$ subgrid $\vect{B}$ of $\vect{A}$ with colour $(s_1,\ldots, s_d)$.
From the definition of the monotonicity pattern $(s_1,\ldots,s_d)$, we can see that $f\rvert_{\vect{B}}$ is monotone.
\end{proof}

In order to prove $M_3(n) \le 2^{2^{(2+o(1))n^2}}$, we devise a different argument, not going through the intermediate problem of bounding $M'_d(n)$. 

\begin{thm}
\label{thm:monotone-3d}
We have $M_3(n) \le 
2^{2^{(2+o(1))n^2}}$.
\end{thm}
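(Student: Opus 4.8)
The plan is to carve out the monotone $n\times n\times n$ subarray by peeling off the three coordinate directions one at a time, orchestrating the three successive reductions so that the accumulated loss is only $2^{2^{(2+o(1))n^2}}$.

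First I would dispose of the first coordinate cheaply, exactly as in the proof of \Cref{thm:monotone-asym-2d}: apply one-dimensional Erd\H{o}s--Szekeres inside every line in the first direction to get a monotone block of length $2n$, and then pigeonhole over the $\binom{N_1}{2n}$ possible positions of this block together with its two possible orientations. Taking $N_1=4n^2$, this fixes a position set $R_1$ of size $2n$ and an orientation valid for a $\tfrac{1}{2\binom{4n^2}{2n}}$-fraction of the pairs of second/third coordinates; that fraction is a dense subset of a two-dimensional grid, and a K\H{o}v\'ari--S\'os--Tur\'an/Zarankiewicz-type estimate (in the spirit of \Cref{lem:grid-2d}) extracts from it a sub-grid $B_2\times B_3$ whose second side has size only $\mathrm{poly}(n)$ while whose third side is still of order $2^{2^{(2+o(1))n^2}}$. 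Carrying out the same manoeuvre in the second coordinate replaces $B_2$ by a $2n$-element set $R_2$, makes the second direction consistently monotone, and costs the surviving third-direction grid only a $2^{O(n^2\log n)}$ factor, which is negligible against the double-exponential-in-$n^2$ bulk.

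Second I would handle the remaining, ``tall'' third direction by the iterated square-root Erd\H{o}s--Szekeres argument used in the proofs of \Cref{thm:monotone-asym-2d} and \Cref{thm:non-consistent-monotone}: running over the at most $(2n)^2=4n^2$ lines of the third direction, build a nested chain $H_0\supseteq H_1\supseteq\cdots$ of third-coordinate sets with $|H_i|\ge\sqrt{|H_{i-1}|}$, ending with a common set $H$ of size $\ge M^{1/2^{4n^2}}$ on which every direction-three line is monotone. Choosing the starting dimension $M$ of order $n^{2^{\,2n^2}}=2^{2^{(2+o(1))n^2}}$ guarantees $|H|\ge n$; this single step accounts for the whole double-exponential-in-$n^2$ size, the exponent $\Theta(n^2)$ coming from running Erd\H{o}s--Szekeres once for a constant times $n^2$ lines.

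The delicate point, and the step I expect to be the main obstacle, is recovering orientation consistency in the third direction: after the previous step each direction-three line is monotone, but its orientation is a $2$-colouring of the grid of line indices, and one needs a monochromatic $n\times n$ subgrid of that colouring, which a grid of side $\Theta(n)$ is far too small to force. Handling this forces one to be more careful in the first stage --- normalising orientations by reflecting individual directions as the coordinates are fixed, and keeping just enough structure (a side of length $2^{\Theta(n)}$, or an already-established consistency) in the already-processed directions so that \Cref{lem:grid-2d} does apply --- while simultaneously keeping the number of direction-three lines, hence the exponent in the iterated square-root step, of order $n^2$, and keeping all the first-stage pigeonhole and Zarankiewicz losses below $2^{2^{(2+o(1))n^2}}$. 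Reconciling these three requirements at once is where the real work of the proof lies; once the parameters are balanced, the computation collapses to the claimed bound.
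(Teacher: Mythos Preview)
Your plan has a genuine, irreparable gap at exactly the point you flag as ``the main obstacle'': the two constraints you are trying to reconcile are in fact incompatible. You want (a) the grid $R_1\times R_2$ of direction-three line indices to have only $O(n^2)$ points, so that the iterated square-root Erd\H{o}s--Szekeres step costs at most a $2^{O(n^2)}$-fold exponent; and (b) every $2$-colouring of $R_1\times R_2$ to contain a monochromatic $n\times n$ subgrid, so that you can align the direction-three orientations. But (b) forces $|R_1|\cdot|R_2|$ to be exponential in $n$: a random $2$-colouring of an $N\times N$ grid avoids monochromatic $n\times n$ subgrids as long as $N\lesssim 2^{n/2}$, so no parameter choice can satisfy both (a) and (b). ``Normalising orientations by reflection'' does not help, because the direction-three orientation pattern is an arbitrary $2$-colouring of $R_1\times R_2$ regardless of how directions one and two have been processed.

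The paper does not use iterated Erd\H{o}s--Szekeres on direction three at all. Instead it keeps the second side large, $|S_2|=n2^{2n}$, precisely so that \Cref{lem:grid-2d} applies, and replaces your line-by-line square-root step by a pairwise argument: for every pair of heights $h<h'$ in $S_3$, compare the two layers pointwise, obtain a $2$-colouring of $S_1\times S_2$, and extract a monochromatic $n\times n$ subgrid $\vect{B}_{hh'}$. This turns the problem into an edge-colouring of the complete graph on $S_3$ by the pair (location of $\vect{B}_{hh'}$, its colour), with only $k=2\binom{2n}{n}\binom{n2^{2n}}{n}=2^{(2+o(1))n^2}$ colours; the Erd\H{o}s--Rado bound $k^{kn}$ on multicolour Ramsey numbers then yields a monochromatic $K_n$, giving both a common $n\times n$ location $\vect{B}$ and consistent direction-three orientation in one stroke. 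The entire double exponential comes from this Ramsey step, not from any iterated square root, and that is the idea your outline is missing.
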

\begin{proof}
Let $X_1=[16n^2], X_2=[2^{2^{6n}}]$ and $X_3=[2^{2^{(2+o(1))n^2}}]$. To prove \Cref{thm:monotone} \ref{itm:3d} it suffices to show that any 3-dimensional array $f$ indexed by $X_1\times X_2 \times X_3$ contains an $n \times n \times n$ monotone subarray.

For each ``height'' $h\in X_3$, let $\vect{C}_h=X_1 \times X_2 \times \{h\}$. As $2^{2^{6n}} \ge (2n2^{2n})^{2^{4n}}$, \Cref{thm:monotone-asym-2d} implies that each $\vect{C}_h$ contains a $2n \times n 2^{2n} \times 1$ monotone subarray.
There are $\binom{16n^2}{2n}\binom{2^{2^{6n}}}{n2^{2n}}$ different possibilities for a $2n \times n 2^{2n}\times 1$ monotone subarray, and the monotonicity pattern of each such subarray is a vector $\vect{s}\in \{-1,1\}^2$. Since $4\binom{16n^2}{2n}\binom{2^{2^{6n}}}{n2^{2n}}=2^{2^{o(n^2)}}$, by pigeonhole principle, we can find a vector $\vect{s} \in \{-1,1\}^2$ and three subsets $S_1 \subseteq X_1, S_2\subseteq X_2,S_3\subseteq X_3$ with $|S_1|=2n,$ $|S_2|=n2^{2n}$ and $|S_3|=2^{2^{(2+o(1))n^2}}$ such that for any $h\in S_3$ the array $f|_{S_1\times S_2 \times \{h\}}$ is monotone with pattern $\vect{s}$. Our remaining goal is to find an $n \times n$ subgrid of $S_1 \times S_2$ such that for any pair $(a_1,a_2)$ of this subgrid, $f(a_1,a_2,\cdot)$ is always increasing or always decreasing on some fixed subset of size $n$ of $S_3$.

For each $h\in S_3$, let $\vect{L}_h=S_1\times S_2 \times \{h\}$. 
We can think of $\vect{L}_h$'s as ``layers'' stacked one on top of each other. Given two layers $\vect{L}_h$ and $\vect{L}_{h'}$ with $h<h'$, we colour an element $\vect{v} \in S_1 \times S_2$ in red if $f(\vect{v},h)>f(\vect{v},h')$, and blue otherwise. This way we obtain a colouring of $S_1\times S_2$ with two colours, so by \Cref{lem:grid-2d} we can find a monochromatic subgrid $\vect{B}_{hh'}$ of size $n\times n$. We now consider the following edge-colouring of the complete graph on the vertex set $S_3$ using $k$ colours. We colour the edge between $h$ and $h'$ by 
a pair made of $\vect{B}_{hh'}$ and its monochromatic colour. Since there are at most $\binom{2n}{n}\binom{n2^{2n}}{n}$ possibilities for $\vect{B}_{hh'}$, we must have $k \le 2\binom{2n}{n}\binom{n2^{2n}}{n}=2^{(2+o(1))n^2}$, giving $k^{kn} \le 2^{2^{(2+o(1))n^2}}=|S_3|$. From this and a result of Erd\H{o}s and Rado \cite[Theorem 1]{erdos52} on the multicolor Ramsey numbers which states that any $k$-edge colouring of the complete graph on ${k^{kn}}$ many vertices contains a monochromatic $K_n$, we deduce that our colouring contains a monochromatic $K_n$. Let $H\subseteq S_3$ correspond to the vertices of this $K_n$ and let its colour correspond to an $n \times n$ subgrid $\vect{B}$ and say w.l.o.g. blue. This means that $f(a_1,a_2, \cdot): H \to \bR$ is increasing for all $(a_1,a_2) \in \vect{B}$. So by our construction of $S_1,S_2$ we have that $f$ when restricted to $\vect{B} \times H$ is a monotone array of size $n \times n \times n$.
\end{proof}

\textbf{Remark.}
In the above proof we used the usual Ramsey theorem on our colouring of the complete graph on $S_3$. However, our colouring is not arbitrary and in fact one can instead use the ordered Ramsey number of a path (see \cite{ordered-ramsey}). The third alternative is to only colour an edge according to $\vect{B}_{hh'}$,
and record whether the values increase or decrease between $\vect{B}_{hh'} \times \{h\}$ and $\vect{B}_{hh'} \times \{h'\}$ by a directed edge. This gives us a colouring of a tournament in which we want to find a monochromatic directed path (see \cite{Chvatal72,G-L}). 
Both approaches give slightly better bounds than the one in \Cref{thm:monotone} (ii), but unfortunately still give bounds of the form $M_3(n) \le \towr_3(O(n^2))$.

\section{Lexicographic arrays}
\label{sec:lexicographic}
In this section we show our bounds on $\ML_d(n)$, in particular we prove \Cref{thm:mon-to-lex}.

\subsection{Preliminaries}
A monotone array $f$ is said to be \textit{increasing} if restriction of $f$ to any axis parallel line is an increasing sequence (i.e.\ case {\rm (i)} of the definition of monotonicity always occurs). More formally:

\begin{defn}
A $d$-dimensional array $f$ is \textit{increasing} if $f(\vect{x}) \le f(\vect{y})$ whenever $x_i \le y_i$ for all $i\in [d]$.
\end{defn}

The following definition generalises the notion of a lexicographic array to allow for a custom priority order of coordinates.

\begin{defn}
Given a $d$-dimensional array $f$ and a permutation $\sigma \in \mathfrak{S}_d$, we say $f$ is \textit{(lexicographic) of type $\sigma$} if $f(\vect{x})<f(\vect{y}) \Leftrightarrow (x_{\sigma(1)},\ldots,x_{\sigma(d)})<_{\hbox{lex}} (y_{\sigma(1)},\ldots,y_{\sigma(d)})$ for all possible $\vect{x}$ and $\vect{y}$.
\end{defn}

Recall that an array is said to be lex-monotone if it is possible to permute the coordinates and reflect the array along some dimensions to obtain a lexicographic array. The above definition allows us to separate these two actions. In particular, an alternative definition is that an array is lex-monotone if one can reflect the array along some dimensions to obtain a lexicographic array of some type.

Notice that any subarray of a monotone array is itself monotone and moreover has the same monotonicity pattern. This means that when looking for a lex-monotone subarray  within a monotone array we can only ever find one with the same monotonicity pattern. In other words we may w.l.o.g.\ assume that the starting array is increasing. The following immediate lemma makes this statement formal.

\begin{lem}\label{lem:mon-to-inc}
For every $d,n \in \N$, $\ML_d(n)$ equals the minimum $N$ such that any increasing $d$-dimensional array of size $N\times \ldots \times N$ contains an $n\times\ldots \times n$ subarray of type $\sigma$ for some $\sigma \in \mathfrak{S}_d$.
\end{lem}

\subsection{2-dimensional case}
Notice first that in 2 dimensions there are only two possible types of a (lexicographic) array, namely (1,2) and (2,1). See \Cref{fig:types} for an illustration of both together with an example of the arrow notation which we found useful when thinking about the problem.
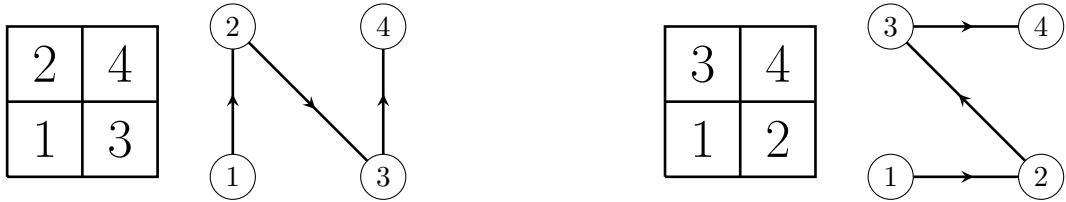
\begin{figure}[h]
\begin{minipage}{0.49\textwidth}
\begin{center}
    \begin{tikzpicture}[scale=2]
   \draw[line width = 1pt] (-1.5,0) -> (-0.5,0) -> (-0.5,1) -> (-1.5,1) -> (-1.5,0);
    \draw[line width = 1pt] (-1,0) -> (-1,1);
    \draw[line width = 1pt] (-1.5,0.5) -> (-0.5,0.5);
    \node[] at (-1.25,0.25) {\huge{\textbf{$1$}}};
    \node[] at (-0.75,0.25) {\huge{\textbf{$3$}}};
    \node[] at (-1.25,0.75) {\huge{\textbf{$2$}}};
    \node[] at (-0.75,0.75) {\huge{\textbf{$4$}}};

    \draw[diredge, line width = 1pt] (1,0) -- (1,1);
    \draw[diredge, line width = 1pt] (0,0) -- (0,1);
    \draw[diredge, line width = 1pt] (0,1) -- (1,0);
    \draw[] (0,0) \vx;
    \node[] at (0,0) {{\textbf{$1$}}};
    
    \draw[] (0,1) \vx;
    \node[] at (0,1) {{\textbf{$2$}}};
    
    \draw[] (1,0) \vx;
    \node[] at (1,0) {{\textbf{$3$}}};
    
    \draw[] (1,1) \vx;
    \node[] at (1,1) {{\textbf{$4$}}};

    \end{tikzpicture}
    
\end{center}
\end{minipage} 
\begin{minipage}{0.49\textwidth}
\begin{center}
    \begin{tikzpicture}[scale=2]
    
    \draw[line width = 1pt] (-1.5,0) -> (-0.5,0) -> (-0.5,1) -> (-1.5,1) -> (-1.5,0);
    \draw[line width = 1pt] (-1,0) -> (-1,1);
    \draw[line width = 1pt] (-1.5,0.5) -> (-0.5,0.5);
    \node[] at (-1.25,0.25) {\huge{\textbf{$1$}}};
    \node[] at (-0.75,0.25) {\huge{\textbf{$2$}}};
    \node[] at (-1.25,0.75) {\huge{\textbf{$3$}}};
    \node[] at (-0.75,0.75) {\huge{\textbf{$4$}}};
    
    \draw[diredge, line width = 1pt] (0,0) -> (1,0); 
    \draw[diredge, line width = 1pt] (0,1) -> (1,1);
    \draw[diredge, line width = 1pt] (1,0) -> (0,1);
    \draw[] (0,0) \vx;
    \node[] at (0,0) {{\textbf{$1$}}};
    
    \draw[] (0,1) \vx;
    \node[] at (0,1) {{\textbf{$3$}}};
    
    \draw[] (1,0) \vx;
    \node[] at (1,0) {{\textbf{$2$}}};
    
    \draw[] (1,1) \vx;
    \node[] at (1,1) {{\textbf{$4$}}};
    \end{tikzpicture}
    
\end{center}
\end{minipage} 

\caption{Lexicographic arrays of type (1,2) and (2,1), arrows point towards larger points.}
    \label{fig:types}
\end{figure}

We begin with a proof of the upper bound $\ML_2(n) \le 2n^2-5n+4$ as it sheds some light to where our lower bound construction is coming from. 

\begin{thm}[Fishburn and Graham \cite{FG93}]
\label{thm:F-G}
For $n \in \N$ we have $\ML_2(n)\le 2n^2-5n+4$.
\end{thm}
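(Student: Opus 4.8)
By \Cref{lem:mon-to-inc} it suffices to show that any increasing $2$-dimensional array $f$ of size $N \times N$ with $N = 2n^2 - 5n + 4$ contains an $n \times n$ subarray of type $(1,2)$ or of type $(2,1)$. Recall that type $(1,2)$ means that the rows (the first coordinate having priority) dominate: reading entries in increasing order of the first coordinate, then breaking ties by the second, gives the value order; equivalently every entry in row $i$ is smaller than every entry in row $i' > i$. Type $(2,1)$ is the transpose statement for columns. So the plan is: among the $N$ rows, try to find $n$ rows that are pairwise ``comparable'' in the row-domination sense on a common set of $n$ columns; if we cannot, extract enough structure to build a type-$(2,1)$ subarray instead.

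The approach I would take is a Dilworth/Erd\H{o}s--Szekeres style argument on a carefully chosen partial order. First I would restrict attention to a cleverly chosen set of columns: since $f$ is increasing, looking along the top row and the bottom row already gives monotone (indeed increasing) sequences, so one can reduce to a subarray where the extreme rows are ``aligned''. More usefully, for each pair of rows $i < i'$ define $i \prec i'$ if $f(i, N) < f(i', 1)$, i.e. even the largest entry of row $i$ (rightmost, since increasing) is below the smallest entry of row $i'$ (leftmost); this is exactly the condition that rows $i$ and $i'$ are in type-$(1,2)$ position on the full set of columns. The relation $\prec$ is a partial order on $[N]$. If it has a chain of length $n$, those $n$ rows together with all $n$ columns (in fact any $n$ columns) form a type-$(1,2)$ subarray and we are done. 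Otherwise, by Dilworth's theorem, $[N]$ partitions into at most $n-1$ antichains; some antichain $A$ has size at least $\lceil N/(n-1) \rceil$. For rows $i < i'$ in an antichain we have $f(i,N) \ge f(i',1)$, meaning the value intervals $[f(i,1), f(i,N)]$ of consecutive antichain rows overlap; the overlap structure forces, for each such adjacent pair, a column where the order of the two rows' entries ``crosses'', and one then wants to find a single set of $n$ columns and $n$ rows inside $A$ realizing type $(2,1)$. The quantitative bookkeeping — arranging that $|A| \ge $ (something) forces the second Erd\H{o}s--Szekeres extraction to succeed with exactly $n$ columns, and optimizing so the threshold is precisely $2n^2 - 5n + 4 = 2(n-1)(n-2) + n$ rather than a cruder $2n^2$ — is the delicate part.

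In more detail for the antichain case: inside $A$, order the rows and use that $f$ is increasing. For each pair of adjacent rows in $A$ I'd locate the largest column $c$ with $f(i,c) < f(i',c)$ versus where the inequality reverses; being in an antichain means the two rows' entries do interleave, so there is a ``swap position.'' Collecting these and applying Erd\H{o}s--Szekeres / a greedy chain argument to the sequence of swap positions (or to the values along a fixed column) should yield $n$ columns and $n$ rows inside $A$ in type-$(2,1)$ configuration, provided $|A| \ge (n-1)^2 + 1$ or a comparable bound. Matching the two regimes: we need $N$ large enough that EITHER the $\prec$-chain has length $n$ OR $N/(n-1) \ge (\text{threshold for the second step})$; the claimed value $2n^2 - 5n + 4$ is what makes both alternatives tight simultaneously, and I'd expect the authors (following Fishburn--Graham) to track the $\pm 1$'s by using $M_1(n) = (n-1)^2 + 1$ exactly rather than asymptotically.

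The main obstacle, I expect, is the second case: translating ``$A$ is a large $\prec$-antichain'' into an actual type-$(2,1)$ subarray of the correct size. Antichain merely says the value intervals of the rows pairwise overlap, which is weaker than what one wants (a consistent column order reversing the row order on a common $n$-set of columns), so one needs a further pigeonhole/Erd\H{o}s--Szekeres step — likely on the positions where pairs of rows cross, or on a single column's values across $A$ — and it is there that the precise constant in $2n^2 - 5n + 4$ is won or lost. The first case (long $\prec$-chain $\Rightarrow$ type $(1,2)$) should be essentially immediate once the partial order is set up correctly.
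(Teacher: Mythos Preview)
Your approach is genuinely different from the paper's, and it has a concrete error in the antichain case. You propose, for adjacent rows $i<i'$ in the antichain, to ``locate the largest column $c$ with $f(i,c)<f(i',c)$ versus where the inequality reverses.'' But $f$ is increasing, so $f(i,c)<f(i',c)$ holds for \emph{every} column $c$; there is no swap position. The antichain condition $f(i,N)\ge f(i',1)$ compares the maximum of row $i$ with the minimum of row $i'$, and says nothing about within-column order reversing. So the mechanism you suggest for building a type-$(2,1)$ subarray from the antichain does not exist, and this half of the argument is currently empty. Your numerology also doesn't fit: with $N=2n^2-5n+4$ the antichain you obtain has size only $\lceil N/(n-1)\rceil = 2n-2$, not the ``$(n-1)^2+1$'' you mention; if you really needed an antichain of that latter size you would be proving a cubic bound, not a quadratic one.

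The paper's argument is quite different, short, and symmetric in the two coordinates. One takes the evenly spaced points $a_i=(n-1)(i-1)+1$ for $i\in[2n-2]$, so $a_{2n-2}=N$. On the $(2n-3)\times(2n-3)$ grid $\{a_1,\ldots,a_{2n-3}\}^2$ one $2$-colours $(a_i,a_j)$ red if $f(a_{i+1},a_j)<f(a_i,a_{j+1})$ and blue otherwise. Since $(n-2)(2n-3)+(n-2)(2n-3)<(2n-3)^2$, by pigeonhole some row has $n-1$ red points or some column has $n-1$ blue points. In the red case, with red points $(a_i,a_{j_1}),\ldots,(a_i,a_{j_{n-1}})$ in row $a_i$, the subarray indexed by $[a_i,a_{i+1}]\times\{a_{j_1},\ldots,a_{j_{n-1}},a_{j_{n-1}+1}\}$ is of type $(1,2)$: the red condition together with monotonicity forces every entry in one block of columns to lie below every entry in the next. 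The key idea you are missing is comparing the ``off-diagonal'' values $f(a_{i+1},a_j)$ and $f(a_i,a_{j+1})$; this single comparison, not a partial order on rows, is what encodes whether the interval $[a_i,a_{i+1}]$ of first coordinates fits lexicographically between columns $a_j$ and $a_{j+1}$.
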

\begin{proof}
Let $f$ be an increasing array indexed by $[N] \times [N]$, where $N=(n-1)(2n-3)+1$. For $i \in [2n-2]$, let $a_i=(n-1)(i-1)+1$. Define a red-blue colouring of the grid $\{a_1,\ldots,a_{2n-3}\}\times \{a_1,\ldots,a_{2n-3}\}$ as follows. For every $i,j \in [2n-3]$, we colour $(a_i,a_j)$ red if $f(a_{i+1},a_j) < f(a_i,a_{j+1})$, and blue otherwise. As $(n-2)(2n-3)+(n-2)(2n-3)<(2n-3)^2$, there exists a row with at least $n-1$ red points or a column with at least $n-1$ blue points. By symmetry, we can assume $(a_i,a_{j_1}),\ldots, (a_i,a_{j_{n-1}})$ are $n-1$ red points in a row $a_i$ with $j_1<j_2< \ldots<j_{n-1}$. One can check that the $n\times n$ subarray of $f$ indexed by $[a_i,a_{i+1}] \times \{a_{j_1}, \ldots, a_{j_{n-1}},a_{j_{n-1}+1}\}$ is of type $(1,2)$. Hence $F_2(n) \le N=2n^2-5n+4$, as required.
\end{proof}

The remainder of this subsection is devoted to the proof of the lower bound $F_2(d) \ge 2n^2-5n+4$. We will make ample use of the immediate observation that any subarray of a lexicographic array of type $\sigma$ which has size at least $2$ in each dimension must also be of type $\sigma$. We first construct a ``building block'' for our actual construction showing $F_2(d) \ge 2n^2-5n+4$.

\begin{lem}\label{lem:block1}
For $n\ge 3$, there exists an increasing array $g$ of size $(n-1)(n-2)\times (n-1)^2$ such that
\begin{enumerate}
    \item[\rm (G1)] $g$ does not contain an $(n-1) \times 2$ subarray of type $(1,2)$,
    \item[\rm (G2)] $g$ does not contain an $n \times 2$ subarray of type $(2,1)$.
\end{enumerate}
\end{lem}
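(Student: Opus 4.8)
The plan is to construct $g$ explicitly on the grid $[a,b]\times[c,d]$ of size $(n-1)(n-2)\times(n-1)^2$ by subdividing each coordinate into consecutive ``chunks'' and prescribing, chunk by chunk, whether the array locally looks like type $(1,2)$ or like type $(2,1)$, in a pattern that never lets either forbidden configuration accumulate enough rows/columns. Concretely, I would split the first coordinate into $n-2$ blocks of length $n-1$ and the second coordinate into $n-1$ blocks of length $n-1$; on the $(i,j)$-th combinatorial ``cell'' I would place an increasing array whose comparison between horizontally- and vertically-adjacent steps is governed by a single bit $\varepsilon_{ij}\in\{0,1\}$ (meaning: on that cell $f(x+1,y)<f(x,y+1)$ always, or always $>$). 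The key design choice is to make the bit pattern $(\varepsilon_{ij})$ so that in any single row of cells the number of consecutive $0$'s is at most $n-3$ (blocking a type-$(1,2)$ subarray that is $(n-1)$ tall) and in any single column of cells the number of consecutive $1$'s is at most $n-2$ (blocking a type-$(2,1)$ subarray that is $n$ wide); a diagonal-shift pattern $\varepsilon_{ij}=\mathbf 1[\text{something like } i+j \text{ in a prescribed residue band}]$ should do this, and the exact dimensions $(n-1)(n-2)$ versus $(n-1)^2$ are precisely what makes the counting work out.

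The key steps, in order, are: (1) fix the block decomposition of the two coordinate intervals and the auxiliary bit matrix $(\varepsilon_{ij})$, stating its combinatorial property (bounded runs of $0$'s in rows, bounded runs of $1$'s in columns); (2) define $g$ on the grid so that it is genuinely increasing in both coordinates \emph{and} so that the local type on cell $(i,j)$ is exactly $\varepsilon_{ij}$ — this amounts to choosing the numerical values on each cell as an affine function of the coordinates with slopes tuned to $\varepsilon_{ij}$, then patching the cells monotonically (a lexicographic-in-the-block-index glueing with a large enough ``gap'' between blocks keeps everything increasing); (3) use the observation already quoted in the text — any subarray of a type-$\sigma$ lexicographic array of side $\ge 2$ in each direction is again type $\sigma$ — to argue that a putative $(n-1)\times 2$ subarray of type $(1,2)$ would have, in each of its $n-1$ rows, a consecutive pair of columns on which the local comparison is ``$<$'', i.e.\ would force a long run of $0$'s in a row of $(\varepsilon_{ij})$, contradiction; symmetrically an $n\times 2$ subarray of type $(2,1)$ forces a long run of $1$'s in a column, contradiction.

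The main obstacle I expect is step (2): making the local-type prescription $\varepsilon_{ij}$ and the global monotonicity of $g$ coexist. Prescribing ``$f(x+1,y)<f(x,y+1)$ on this cell'' is a constraint on the \emph{order} of two increments, and one has to be careful that when two adjacent cells carry different bits the values still line up increasingly across the seam, and that across block boundaries (where coordinates jump between chunks) a single global additive offset per block is enough to dominate all intra-block variation. I would handle this by first building the array abstractly (as a linear order on the grid points, i.e.\ via a rank function) rather than with explicit reals: define $\mathrm{rank}(x,y)$ by writing $x,y$ in ``block coordinates'' $(i,\xi)$, $(j,\eta)$ and setting $\mathrm{rank}$ to be a carefully weighted combination — outermost weight on $i+j$ or on the block-lex order, then on $\xi,\eta$ with the within-cell tie broken according to $\varepsilon_{ij}$ — and then check increasingness and the two local-type claims directly from this formula. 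A secondary nuisance is getting the run-length bounds in $(\varepsilon_{ij})$ to match the exact side lengths $(n-1)(n-2)$ and $(n-1)^2$ rather than something off by a block; this is just bookkeeping but it is where an off-by-one would break the later global construction, so I would double-check the arithmetic $(n-2)(n-1)$ blocks-of-rows against ``runs of length $\le n-3$'' and $(n-1)$ blocks-of-columns against ``runs of length $\le n-2$'' carefully.
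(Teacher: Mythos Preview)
Your plan is over-engineered, and step~(3) as written does not work. The paper's construction is much simpler than what you propose: split only the \emph{first} coordinate into $n-2$ consecutive intervals of length $n-1$, obtaining vertical strips $\vect{C}_1,\ldots,\vect{C}_{n-2}$; make each $g|_{\vect{C}_i}$ an array of type $(2,1)$; and order the strips so that $g|_{\vect{C}_1}<\cdots<g|_{\vect{C}_{n-2}}$. That is all. There is no second-coordinate block decomposition and no bit matrix $(\varepsilon_{ij})$ --- in your language, the bit is constant.

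The verification is then immediate. For (G1), an $(n-1)\times 2$ subarray has $n-1$ first-coordinate values distributed among only $n-2$ strips, so by pigeonhole two of them lie in the same $\vect{C}_i$; the resulting $2\times 2$ sub-subarray inherits type $(2,1)$ from $\vect{C}_i$, contradicting type $(1,2)$. For (G2), an $n\times 2$ subarray cannot have all $n$ first-coordinate values inside a single width-$(n-1)$ strip, so two of them lie in different strips $\vect{C}_i,\vect{C}_j$; the global ordering $g|_{\vect{C}_i}<g|_{\vect{C}_j}$ forces the corresponding $2\times 2$ sub-subarray to be of type $(1,2)$, contradicting type $(2,1)$.

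Your step~(3) misidentifies the combinatorics. An $(n-1)\times 2$ subarray has only \emph{two} values in the second coordinate, so it cannot ``force a long run of $0$'s in a row of $(\varepsilon_{ij})$'': it touches at most two $j$-blocks, not a run. Symmetrically, an $n\times 2$ subarray is long only in the first coordinate, and the $n-2$ first-coordinate blocks give you at most $n-2$ cells in any column of $(\varepsilon_{ij})$, so talking about ``runs of length $\le n-2$'' there is vacuous. The mechanism that actually drives both (G1) and (G2) is pigeonhole on the $n-1$ or $n$ first-coordinate values against the $n-2$ strips --- not any run-length property of a bit pattern. Your intuition that the local type on a cell should conflict with the global type of a putative subarray is correct, but it is realised by the constant choice $\varepsilon\equiv\text{``type }(2,1)\text{''}$ together with the inter-strip ordering, and the elaborate diagonal-shift pattern you sketch is unnecessary (and, with the argument you give, unverified).
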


\begin{proof}
For $1 \le i \le n-2$, let $\vect{C}_i=[(n-1)(i-1)+1, (n-1)i] \times [(n-1)^2]$. We choose an array $g$ (see \Cref{fig:g} for an illustration), indexed by $[(n-1)(n-2)]\times [(n-1)^2]$, such that 
\begin{itemize}
    \item $g\rvert_{\vect{C_1}}< \ldots < g \rvert_{\vect{C}_{n-2}}$,
    \item For each $i \in [n-2]$, $g\rvert_{\vect{C}_i}$ is of type $(2,1)$.
\end{itemize}

\begin{figure}[h]
    \centering
    \begin{tikzpicture}[scale=0.55]

\def \x {2.5}
\def \dx {0.25}
\def \y {4*\x+3*\dx}
\def \ys {3*\x+3*\dx}
\def \epsi {0.1}

\defPt{0}{0}{v1}
\defPt{\x}{0}{v2}
\defPt{\x+\dx}{0}{v3}
\defPt{2*\x+\dx}{0}{v4}
\defPt{3*\x+2*\dx}{0}{v5}
\defPt{4*\x+2*\dx}{0}{v6}
\defPt{0}{\y}{u1}
\defPt{\x}{\y}{u2}
\defPt{\x+\dx}{\y}{u3}
\defPt{2*\x+\dx}{\y}{u4}
\defPt{3*\x+2*\dx}{\y}{u5}
\defPt{4*\x+2*\dx}{\y}{u6}

\draw [thick, black, decorate,decoration={brace,amplitude=6pt,  mirror}] ($(v1)+(0,-\epsi)$) -- ($(v2)+(0,-\epsi)$) node[black,midway,yshift=-0.5cm] { $n-1$};

\draw [thick, black, decorate,decoration={brace,amplitude=6pt,  mirror}] ($(v3)+(0,-\epsi)$) -- ($(v4)+(0,-\epsi)$) node[black,midway,yshift=-0.5cm] { $n-1$};

\draw [thick, black, decorate,decoration={brace,amplitude=6pt,  mirror}] ($(v5)+(0,-\epsi)$) -- ($(v6)+(0,-\epsi)$) node[black,midway,yshift=-0.5cm] { $n-1$};

\draw [thick, black, decorate,decoration={brace,amplitude=10pt}] ($(v1)+(-\epsi,0)$) -- ($(u1)+(-\epsi,0)$) node[black,midway,xshift=-1.2cm] { $(n-1)^2$};

\draw[diredge, line width=1 pt] (v1) -- (v2);
\draw[diredge, line width=1 pt] (v3) -- (v4);
\draw[diredge, line width=1 pt] (v5) -- (v6);

\draw[diredge, line width=1 pt] (u1) -- (u2);
\draw[diredge, line width=1 pt] (u3) -- (u4);
\draw[diredge, line width=1 pt] (u5) -- (u6);

\draw[diredge, line width=1 pt] (v2)-- (u1);
\draw[diredge, line width=1 pt] (v4)-- (u3);
\draw[diredge, line width=1 pt] (v6)-- (u5);

    \foreach \j in {1,...,6}
    {
        \draw[line width=1 pt] (v\j) -- (u\j);
    }
    
    \node[] at ($0.395*(u1)+0.12*(v1)+0.395*(u2)+0.12*(v2)$) {{\textbf{$1$}}};
    
    \node[] at ($0.395*(u3)+0.12*(v3)+0.395*(u4)+0.12*(v4)$) {{\textbf{$2$}}};
    
    \node[] at ($0.5*(u3)+0.5*(v6)$) {\Huge{\textbf{$\cdots$}}};
    
    \node[] at ($0.395*(u5)+0.12*(v5)+0.395*(u6)+0.12*(v6)$) {{\textbf{$n-2$}}};    
\end{tikzpicture}
    \caption{An illustration of the array $g$. Directed arrows point towards a position with a larger value of $g$. Numbers denote relative order of subarrays.}
    \label{fig:g}
\end{figure}
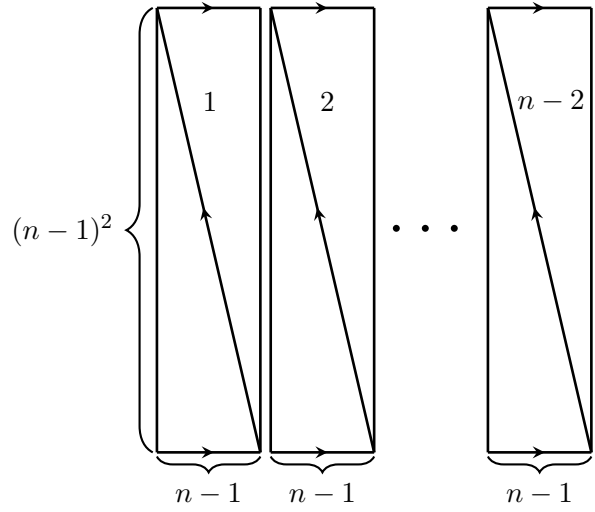

For (G1), if such a subarray exists, then at least one of $\vect{C}_i$'s would need to intersect this subarray in some $2 \times 2$ subarray. By the second property of $g$, the $2\times 2$ subarray is of type $(2,1)$, a contradiction. 

For (G2), if such a subarray exists it would intersect at least two distinct $\vect{C}_i$'s, and so it would contain a $2\times 2$ subarray of type $(1,2)$, a contradiction.
\end{proof} 

Another building block of our construction is the following.
\begin{lem}\label{lem:block2}
For $n\ge 3$, there is an increasing array $h$ of size $(n-1)^2\times (n-1)(n-2)$ such that
\begin{enumerate}
    \item[\rm (H1)] $h$ does not contain a $2 \times n$ subarray of type $(1,2)$,
    \item[\rm (H2)] $h$ does not contain a $2 \times (n-1)$ subarray of type $(2,1)$.
    \end{enumerate}
\end{lem}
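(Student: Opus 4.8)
The plan is to produce $h$ as essentially the ``transpose'' of the array $g$ from \Cref{lem:block1}, exploiting the fact that transposing a $2$-dimensional array swaps the two roles of type $(1,2)$ and type $(2,1)$. Concretely, if $g'$ denotes the array obtained from the array $g$ of \Cref{lem:block1} by interchanging the two coordinates (i.e.\ $g'(x,y)=g(y,x)$), then $g'$ is still increasing, it has size $(n-1)^2\times(n-1)(n-2)$, and a subarray of $g'$ has type $(1,2)$ precisely when the corresponding subarray of $g$ has type $(2,1)$, and vice versa. Hence (G2) for $g$ — no $n\times 2$ subarray of type $(2,1)$ — becomes: $g'$ has no $2\times n$ subarray of type $(1,2)$, which is exactly (H1). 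However, (G1) for $g$ — no $(n-1)\times 2$ subarray of type $(1,2)$ — only gives us that $g'$ has no $2\times(n-1)$ subarray of type $(2,1)$, which is exactly (H2). So in fact the transpose of $g$ already works verbatim, and the proof is a one-line reduction to \Cref{lem:block1}.

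To be safe I would nonetheless spell out the direct construction paralleling the proof of \Cref{lem:block1}, in case one wants $h$ explicitly. That is: for $1\le i\le n-2$ set $\vect{R}_i=[(n-1)^2]\times[(n-1)(i-1)+1,(n-1)i]$, and choose an increasing array $h$ indexed by $[(n-1)^2]\times[(n-1)(n-2)]$ such that $h\rvert_{\vect{R}_1}<\ldots<h\rvert_{\vect{R}_{n-2}}$ and each $h\rvert_{\vect{R}_i}$ is of type $(1,2)$ (the ``horizontal'' analogue of what $g$ did vertically). Such an array plainly exists: stack $n-2$ blocks side by side along the second coordinate, each block internally ordered lexicographically with the first coordinate as primary key, and with all values in block $i$ smaller than all values in block $i+1$; one checks readily that the global order is increasing.

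For the two properties: for (H1), any $2\times n$ subarray of type $(1,2)$ would have to meet some single block $\vect{R}_i$ in a $2\times 2$ subarray (since it has width $2$ along the first coordinate and the blocks partition the second coordinate, and a width-$n$ set along the second coordinate of a type-$(1,2)$ array would need... — more simply, a $2\times n$ subarray that met two different blocks would contain a $2\times 2$ subarray straddling two consecutive blocks, which by the ordering $h\rvert_{\vect{R}_i}<h\rvert_{\vect{R}_{i+1}}$ is of type $(1,2)$ only in the ``wrong'' orientation — I will instead argue as in (G2): such a subarray meeting two blocks contains a $2\times 2$ subarray of type $(2,1)$, contradiction; and a subarray inside one block $\vect{R}_i$ has type $(1,2)$, but it can have width at most $n-1$ along the second coordinate... ) — here I need to be a little careful, so let me mirror (G1)/(G2) exactly. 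For (H2): if a $2\times(n-1)$ subarray of type $(2,1)$ existed, it would have to meet two distinct blocks $\vect{R}_i$, hence contain a $2\times 2$ subarray of type $(1,2)$ (coming from the straddle), contradiction. For (H1): if a $2\times n$ subarray of type $(1,2)$ existed, then, since the blocks have width $n-1<n$ along the second coordinate, it cannot lie in a single block, so it meets two blocks and contains a $2\times 2$ subarray; but any $2\times 2$ subarray inside a single block is of type $(1,2)$ while a straddling one is of type... — the cleanest route really is the transpose argument, so I would ultimately just write: ``take $h(x,y):=g(y,x)$ with $g$ from \Cref{lem:block1}; then (H1) is (G2) and (H2) is (G1) read under the coordinate swap.''

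The main obstacle is essentially bookkeeping: making sure the dimensions line up ($g$ has size $(n-1)(n-2)\times(n-1)^2$, so its transpose has size $(n-1)^2\times(n-1)(n-2)$, matching the claim) and that the forbidden-subarray dimensions are swapped correctly — note the slight asymmetry that (G1) forbids $(n-1)\times 2$ while (H1) forbids $2\times n$, and (G2) forbids $n\times 2$ while (H2) forbids $2\times(n-1)$, so after transposing, (G1)$\leftrightarrow$(H2) and (G2)$\leftrightarrow$(H1), with the first-coordinate length of the forbidden block in one becoming the second-coordinate length in the other. Since these match exactly, the transpose works with no loss, and there is no real difficulty beyond verifying this correspondence.
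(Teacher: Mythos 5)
Your final answer — taking $h(x,y):=g(y,x)$ with $g$ from \Cref{lem:block1} — is correct, and it produces exactly the array the paper constructs directly (blocks of type $(1,2)$ stacked along the second coordinate); the dimension/type bookkeeping you give, with (G1)$\leftrightarrow$(H2) and (G2)$\leftrightarrow$(H1) under the coordinate swap, is right, so the reduction is clean and complete.

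One caution about the abandoned ``direct'' argument in the middle: the claim that a $2\times(n-1)$ subarray of type $(2,1)$ ``would have to meet two distinct blocks $\vect{R}_i$, hence contain a $2\times 2$ subarray of type $(1,2)$ coming from the straddle'' is wrong on both counts — such a subarray can lie entirely inside one block (each $\vect{R}_i$ has width $n-1$), and a $2\times 2$ subarray straddling two consecutive blocks is of type $(2,1)$, not $(1,2)$. The correct mirror is that (H2) parallels (G1): one of the blocks meets the alleged subarray in a $2\times 2$ piece, and that piece is of type $(1,2)$ because the block is. Likewise (H1) parallels (G2), where the $2\times n$ subarray is forced to straddle and the straddle is of type $(2,1)$. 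You caught that something was off and reverted to the transpose reduction, which is the right call; I just want to flag that the swap is (G1)$\to$(H2), (G2)$\to$(H1), not the pairing your direct attempt implicitly used.
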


\begin{proof}
Let $\vect{R}_i= [(n-1)^2] \times [(n-1)i-n+1, (n-1)i]$ for $i \in [n-2]$. Let us define $h$ to be an array indexed by $[(n-1)^2]\times [(n-1)(n-2)]$ so that $h \rvert_{\vect{R}_i}<h \rvert_{\vect{R}_j}$ whenever $i<j$ and so that $h\rvert_{\vect{R}_i}$ is of type $(1,2)$ (see \Cref{fig:h} for an illustration).
This array satisfies the properties (H1) and (H2) by the same argument as in \Cref{lem:block1}.
\end{proof}

\begin{figure}[h]
    \centering
    \begin{tikzpicture}[scale=0.55]

\def \x {2.5}
\def \dx {0.25}
\def \y {4*\x+3*\dx}
\def \ys {3*\x+3*\dx}
\def \epsi {0.1}

\defPt{3*\x+3*\dx}{0}{x1}
\defPt{3*\x+3*\dx+\y}{0}{x2}
\defPt{3*\x+3*\dx}{\x}{y1}
\defPt{3*\x+3*\dx+\y}{\x}{y2}
\defPt{3*\x+3*\dx}{\x+\dx}{x3}
\defPt{3*\x+3*\dx+\y}{\x+\dx}{x4}
\defPt{3*\x+3*\dx}{2*\x+\dx}{y3}
\defPt{3*\x+3*\dx+\y}{2*\x+\dx}{y4}
\defPt{3*\x+3*\dx}{3*\x+2*\dx}{x5}
\defPt{3*\x+3*\dx+\y}{3*\x+2*\dx}{x6}
\defPt{3*\x+3*\dx}{4*\x+2*\dx}{y5}
\defPt{3*\x+3*\dx+\y}{4*\x+2*\dx}{y6}

\draw [thick, black, decorate,decoration={brace,amplitude=6pt}] ($(x5)+(-\epsi,0)$) -- ($(y5)+(-\epsi,0)$) node[black,midway,xshift=-0.7 cm] { $n-1$};

\draw [thick, black, decorate,decoration={brace,amplitude=6pt}] ($(x3)+(-\epsi,0)$) -- ($(y3)+(-\epsi,0)$) node[black,midway,xshift=-0.7 cm] { $n-1$};

\draw [thick, black, decorate,decoration={brace,amplitude=6pt}] ($(x1)+(-\epsi,0)$) -- ($(y1)+(-\epsi,0)$) node[black,midway,xshift=-0.7 cm] { $n-1$};

\draw [thick, black, decorate,decoration={brace,amplitude=10pt,mirror}] ($(x1)+(0,-\epsi)$) -- ($(x2)+(0,-\epsi)$) node[black,midway,yshift=-0.6 cm] { $(n-1)^2$};

\draw[line width=1 pt] (x1) -- (x2);
\draw[line width=1 pt] (x3) -- (x4);
\draw[line width=1 pt] (x5) -- (x6);

\draw[line width=1 pt] (y1) -- (y2);
\draw[line width=1 pt] (y3) -- (y4);
\draw[line width=1 pt] (y5) -- (y6);

\draw[diredge, line width=1 pt] (y1) -- (x2);
\draw[diredge, line width=1 pt]  (y3) -- (x4);
\draw[diredge, line width=1 pt]  (y5) --(x6);

    \foreach \j in {1,...,6}
    {
         \draw[diredge, line width=1 pt] (x\j) -- (y\j);
    }
    
    \node[] at ($0.395*(x2)+0.12*(x1)+0.395*(y2)+0.12*(y1)$) {\Large{\textbf{$1$}}};
    
    \node[] at ($0.395*(x4)+0.12*(x3)+0.395*(y4)+0.12*(y3)$) {\Large{\textbf{$2$}}};
    
    \node[] at ($0.5*(y3)+0.5*(x6)+(0,0.35)$) {\Huge{\textbf{$\vdots$}}};
    
    \node[] at ($0.395*(x6)+0.12*(x5)+0.395*(y6)+0.12*(y5)$) {\Large{\textbf{$n-2$}}};    
\end{tikzpicture}
    \caption{An illustration of the array $h$. Directed arrows point towards a position with a larger value of $h$. Numbers denote relative order of subarrays.}
    \label{fig:h}
\end{figure}
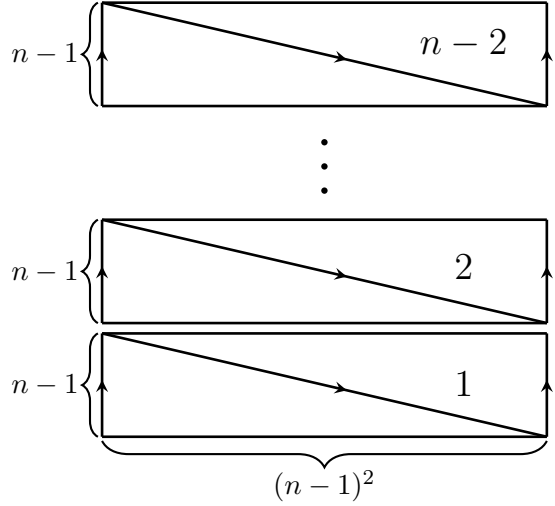

We are now in a position to prove \Cref{thm:mon-to-lex} (i).

\begin{thm}
For $n \in \N$ we have $\ML_2(n) \ge 2n^2-5n+4$.
\end{thm}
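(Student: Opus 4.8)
The plan is to construct an explicit increasing array of size $(2n^2-5n+3) \times (2n^2-5n+3)$ that contains no $n \times n$ lex-monotone subarray, which by \Cref{lem:mon-to-inc} suffices. The idea is to combine the two building blocks from \Cref{lem:block1} and \Cref{lem:block2} in such a way that every large enough axis-parallel window is covered by a copy of $g$ (which blocks type $(1,2)$) or a copy of $h$ (which blocks type $(2,1)$). Observe that $\ML_2(n)-1 = 2n^2-5n+3 = (n-1)(n-2) + (n-1)^2 = (n-1)^2 + (n-1)(n-2)$, so the horizontal extent $(n-1)(n-2)$ of $g$ plus the horizontal extent $(n-1)^2$ of $h$ (when laid side by side) gives exactly this, and likewise for the vertical direction with the roles reversed.

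Concretely, I would partition $[2n^2-5n+3]$ in the first coordinate into an initial block $I_1$ of length $(n-1)(n-2)$ and a final block $I_2$ of length $(n-1)^2$, and in the second coordinate into an initial block $J_1$ of length $(n-1)^2$ and a final block $J_2$ of length $(n-1)(n-2)$. On the sub-rectangle $I_1 \times J_1$ place a copy of the array $g$ from \Cref{lem:block1} (size $(n-1)(n-2) \times (n-1)^2$), and on $I_2 \times J_2$ place a copy of the array $h$ from \Cref{lem:block2} (size $(n-1)^2 \times (n-1)(n-2)$). Then fill in the two remaining sub-rectangles $I_1 \times J_2$ and $I_2 \times J_1$, and choose the relative order of the four blocks, so that the whole array is increasing; for instance arrange the values so that $f|_{I_1\times J_1} < f|_{I_1 \times J_2}, f|_{I_2\times J_1} < f|_{I_2\times J_2}$ with a monotone "staircase" filling of the off-diagonal blocks (this is the routine part — one just needs an increasing completion, which exists because the block-order is consistent with the coordinatewise partial order).

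Now suppose for contradiction that $f$ contains an $n \times n$ subarray of type $\sigma$; since $n \ge 3 > 2$, this subarray, restricted to any axis-parallel sub-window of size $\ge 2$, keeps type $\sigma$. The key pigeonhole step: the subarray uses $n$ columns among $2n^2-5n+3 = (n-1)^2+(n-1)(n-2)$ columns split as $J_1$ (size $(n-1)^2$) and $J_2$ (size $(n-1)(n-2)$); I need to argue that it either has at least $2$ columns in $J_1$ and many rows confined to where $g$ lives, or at least $2$ columns in $J_2$ and rows where $h$ lives — and symmetrically for rows. More carefully: if the subarray has $\ge 2$ columns in $J_1$ and $\ge n$... — here one must track that the $n$ rows split across $I_1$ (size $(n-1)(n-2)$) and $I_2$ (size $(n-1)^2$). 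If $\sigma = (1,2)$, I want to find an $(n-1)\times 2$ sub-subarray inside the copy of $g$, contradicting (G1); this needs $\ge n-1$ rows in $I_1$ and $\ge 2$ columns in $J_1$. If $\sigma = (2,1)$, I want an $n \times 2$ subarray inside $g$ (contradicting (G2)) or a $2 \times n$ or $2\times(n-1)$ subarray inside $h$ (contradicting (H1)/(H2)). The counting has to be set up so that at least one of these always occurs: with $n$ rows in $I_1 \cup I_2$ where $|I_1| = (n-1)(n-2)$ — this is large, so "few rows in $I_1$" is not automatic, and the real work is checking that in each of the finitely many cases (where the $n$ chosen rows and $n$ chosen columns land relative to the partition) one of the four forbidden configurations appears.

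The main obstacle will be exactly this case analysis: making the block sizes and the fill-in order line up so that no $n\times n$ type-$\sigma$ subarray can "dodge" all four forbidden patterns (G1), (G2), (H1), (H2) by spreading its rows and columns cleverly across the block boundaries — in particular one must be careful about subarrays that straddle $I_1$ and $I_2$ or $J_1$ and $J_2$, since a $2\times2$ window straddling the boundary has a type forced by the chosen relative order of the blocks, and that type must be made to help rather than hurt. I expect the off-diagonal blocks $I_1 \times J_2$ and $I_2 \times J_1$ and the boundary behaviour to require the most delicate choices, and the tightness (hitting $2n^2-5n+4$ exactly rather than something slightly smaller) is what forces using both $g$ and $h$ rather than a single simpler block.
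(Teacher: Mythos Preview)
Your four-block layout has a genuine gap, not just an unfinished case analysis. The off-diagonal block $I_2\times J_1$ has size $(n-1)^2\times (n-1)^2$, and \emph{nothing} you have placed there is constrained by (G1), (G2), (H1), (H2). If you fill it with an arbitrary increasing array (``a monotone staircase filling''), it may itself contain an $n\times n$ lex-monotone subarray --- indeed the lexicographic filling does. Saying that one ``just needs an increasing completion'' is exactly the point that fails: you need an increasing completion of a $(n-1)^2\times (n-1)^2$ block which is itself free of $n\times n$ lexicographic subarrays \emph{and} interacts correctly with its neighbours, and that is essentially the problem you are trying to solve. The same issue applies, less severely, to $I_1\times J_2$.

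The paper's construction fixes this by a different decomposition. It partitions each coordinate into \emph{three} intervals $I,J,K$ of sizes $(n-1)(n-2)$, $n-1$, $(n-1)(n-2)$, and tiles $[N]^2$ with \emph{five} blocks: two copies of $g$ (on $I\times(I\cup J)$ and $K\times(J\cup K)$), two copies of $h$ (on $(J\cup K)\times I$ and $(I\cup J)\times K$), and a central $(n-1)\times(n-1)$ block $J\times J$, ordered $A_1<A_2<\cdots<A_5$. The key point is that every cell of $[N]^2$ except the tiny centre lies in a copy of $g$ or of $h$, so the obstructions (G1), (G2), (H1), (H2) together with the boundary properties (P1)--(P4) forced by the block order actually cover all cases. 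With your $2\times 2$ block scheme there is no way to avoid a large region governed by neither $g$ nor $h$; the overlapping coordinate ranges $I\cup J$ and $J\cup K$ in the paper's scheme are precisely what eliminates this dead zone.
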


\begin{proof}
It is immediate that the statement holds for $n=1,2$. We henceforth assume that $n \ge 3$.

Let $N=2n^2-5n+3=(n-1)^2+(n-1)(n-2)$. To prove the statement, it suffices to construct an increasing array $f:[N]^2\rightarrow \bR$ which does not contain an $n\times n$ subgrid of type (1,2) or (2,1).

We first split $[N]^2$ into five subgrids $\vect{A}_1,\ldots,\vect{A}_5$ (see \Cref{fig:f}) such that both $\vect{A}_1$ and $\vect{A}_5$ have size $(n-1)(n-2)\times (n-1)^2$, both $\vect{A}_2$ and $\vect{A}_4$ have size $(n-1)^2\times (n-1)(n-2)$, while $\vect{A}_3$ has size $(n-1)\times (n-1)$. 
Let $g$ and $h$ be arrays given by \Cref{lem:block1} and \Cref{lem:block2}, respectively. The array $f$ is chosen so that $f\rvert_{\vect{A}_1}<f\rvert_{\vect{A}_2}<\ldots < f\rvert_{\vect{A}_5}$, $f\rvert_{\vect{A}_1}$ and $f\rvert_{\vect{A}_5}$ are copies of $g$, $f\rvert_{\vect{A}_2}$ and $f\rvert_{\vect{A}_4}$ are copies of $h$, and $f\rvert_{\vect{A}_3}$ is an arbitrary increasing array. Since $f\rvert_{\vect{A}_1}<f\rvert_{\vect{A}_2}<\ldots < f\rvert_{\vect{A}_5}$ and $f\rvert_{\vect{A}_i}$ is increasing for every $1\le i \le 5$, $f$ is increasing as well. It remains to show that $f$ does not contain an $n \times n$ subarray of type (1,2) or (2,1).

As $f\rvert_{\vect{A}_1}<f\rvert_{\vect{A}_2}<\ldots < f\rvert_{\vect{A}_5}$ and $f\rvert_{\vect{A}_i}$ is increasing for every $1\le i \le 5$, we also find that
\begin{enumerate}
	\item[(P1)] Any $2\times 2$ subarray with two vertices in $\vect{A}_1$ and two vertices to the right of $\vect{A}_1$ is of type (1,2),
	\item[(P2)] Any $2\times 2$ subarray with two vertices in $\vect{A}_5$ and two vertices to the left of $\vect{A}_5$ is of type (1,2),
	\item[(P3)] Any $2\times 2$ subarray with two vertices in $\vect{A}_2$ and two vertices above $\vect{A}_2$ is of type (2,1),
	\item[(P4)] Any $2\times 2$ subarray with two vertices in $\vect{A}_4$ and two vertices below $\vect{A}_4$ is of type (2,1).
\end{enumerate}
We will show that $f$ has the desired property using properties (P1)--(P4) together with conditions (G1), (G2), (H1), (H2) from Lemmas \ref{lem:block1} and \ref{lem:block2}.

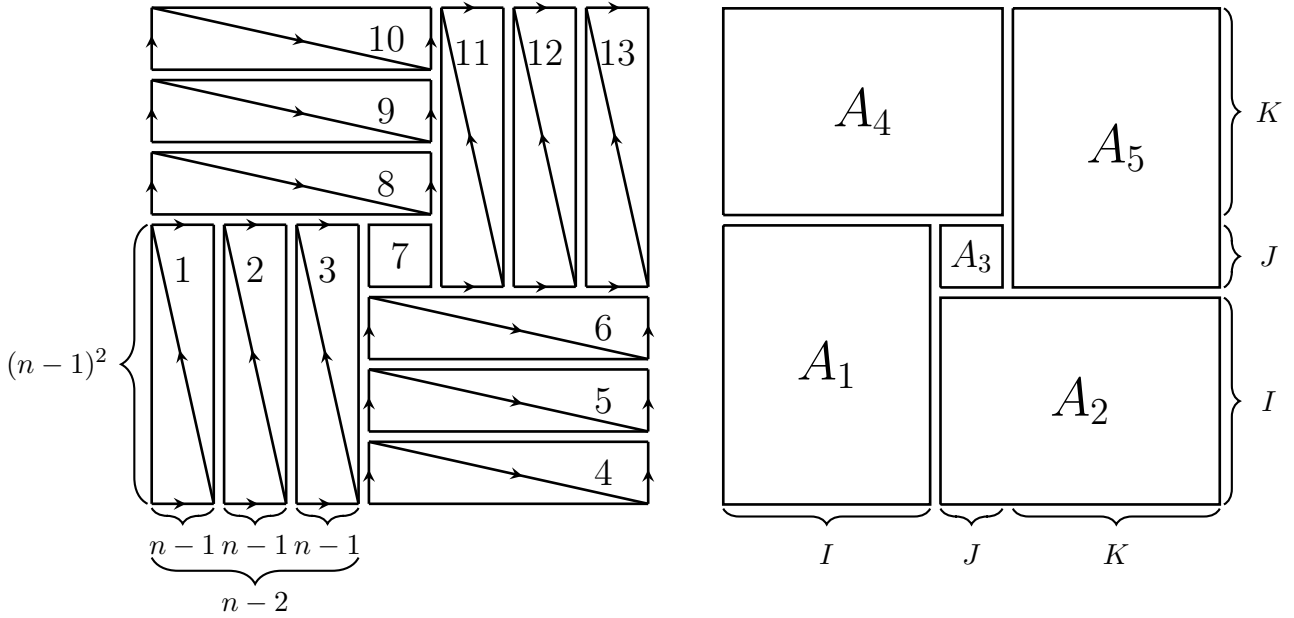
\begin{figure}[h]\centering
\begin{minipage}[b]{0.48\textwidth}\centering
\begin{tikzpicture}[scale=0.55]

\def \x {1.5}
\def \dx {0.25}
\def \y {4*\x+3*\dx}
\def \ys {3*\x+3*\dx}
\def \epsi {0.1}

\defPt{0}{0}{v1}
\defPt{\x}{0}{v2}
\defPt{\x+\dx}{0}{v3}
\defPt{2*\x+\dx}{0}{v4}
\defPt{2*\x+2*\dx}{0}{v5}
\defPt{3*\x+2*\dx}{0}{v6}
\defPt{0}{\y}{u1}
\defPt{\x}{\y}{u2}
\defPt{\x+\dx}{\y}{u3}
\defPt{2*\x+\dx}{\y}{u4}
\defPt{2*\x+2*\dx}{\y}{u5}
\defPt{3*\x+2*\dx}{\y}{u6}

\draw [thick, black, decorate,decoration={brace,amplitude=6pt,  mirror}] ($(v1)+(0,-\epsi)$) -- ($(v2)+(0,-\epsi)$) node[black,midway,yshift=-0.5cm] { $n-1$};

\draw [thick, black, decorate,decoration={brace,amplitude=6pt,  mirror}] ($(v3)+(0,-\epsi)$) -- ($(v4)+(0,-\epsi)$) node[black,midway,yshift=-0.5cm] { $n-1$};

\draw [thick, black, decorate,decoration={brace,amplitude=6pt,  mirror}] ($(v5)+(0,-\epsi)$) -- ($(v6)+(0,-\epsi)$) node[black,midway,yshift=-0.5cm] { $n-1$};

\draw [thick, black, decorate,decoration={brace,amplitude=10pt,  mirror}] ($(v1)+(0,-1.3)$) -- ($(v6)+(0,-1.3)$) node[black,midway,yshift=-0.6cm] { $n-2$};

\draw [thick, black, decorate,decoration={brace,amplitude=10pt}] ($(v1)+(-\epsi,0)$) -- ($(u1)+(-\epsi,0)$) node[black,midway,xshift=-1.2cm] { $(n-1)^2$};

\defPt{3*\x+3*\dx}{0}{x1}
\defPt{3*\x+3*\dx+\y}{0}{x2}
\defPt{3*\x+3*\dx}{\x}{y1}
\defPt{3*\x+3*\dx+\y}{\x}{y2}
\defPt{3*\x+3*\dx}{\x+\dx}{x3}
\defPt{3*\x+3*\dx+\y}{\x+\dx}{x4}
\defPt{3*\x+3*\dx}{2*\x+\dx}{y3}
\defPt{3*\x+3*\dx+\y}{2*\x+\dx}{y4}
\defPt{3*\x+3*\dx}{2*\x+2*\dx}{x5}
\defPt{3*\x+3*\dx+\y}{2*\x+2*\dx}{x6}
\defPt{3*\x+3*\dx}{3*\x+2*\dx}{y5}
\defPt{3*\x+3*\dx+\y}{3*\x+2*\dx}{y6}

\defPt{\y+\dx}{\ys}{vv1}
\defPt{\x+\y+\dx}{\ys}{vv2}
\defPt{\x+\dx+\y+\dx}{\ys}{vv3}
\defPt{2*\x+\dx+\y+\dx}{\ys}{vv4}
\defPt{2*\x+2*\dx+\y+\dx}{\ys}{vv5}
\defPt{3*\x+2*\dx+\y+\dx}{\ys}{vv6}
\defPt{\y+\dx}{\y+\ys}{uu1}
\defPt{\x+\y+\dx}{\y+\ys}{uu2}
\defPt{\x+\dx+\y+\dx}{\y+\ys}{uu3}
\defPt{2*\x+\dx+\y+\dx}{\y+\ys}{uu4}
\defPt{2*\x+2*\dx+\y+\dx}{\y+\ys}{uu5}
\defPt{3*\x+2*\dx+\y+\dx}{\y+\ys}{uu6}

\defPt{0}{\y+\dx}{xx1}
\defPt{0\y}{\y+\dx}{xx2}
\defPt{0}{\x+\y+\dx}{yy1}
\defPt{\y}{\x+\y+\dx}{yy2}
\defPt{0}{\x+\dx+\y+\dx}{xx3}
\defPt{\y}{\x+\dx+\y+\dx}{xx4}
\defPt{0}{2*\x+\dx+\y+\dx}{yy3}
\defPt{\y}{2*\x+\dx+\y+\dx}{yy4}
\defPt{0}{2*\x+2*\dx+\y+\dx}{xx5}
\defPt{\y}{2*\x+2*\dx+\y+\dx}{xx6}
\defPt{0}{3*\x+2*\dx+\y+\dx}{yy5}
\defPt{\y}{3*\x+2*\dx+\y+\dx}{yy6}

\draw[diredge, line width=1 pt] (v1) -- (v2);
\draw[diredge, line width=1 pt] (v3) -- (v4);
\draw[diredge, line width=1 pt] (v5) -- (v6);

\draw[diredge, line width=1 pt] (u1) -- (u2);
\draw[diredge, line width=1 pt] (u3) -- (u4);
\draw[diredge, line width=1 pt] (u5) -- (u6);

\draw[diredge, line width=1 pt] (vv1) -- (vv2);
\draw[diredge, line width=1 pt] (vv3) -- (vv4);
\draw[diredge, line width=1 pt] (vv5) -- (vv6);

\draw[diredge, line width=1 pt] (uu1) -- (uu2);
\draw[diredge, line width=1 pt] (uu3) -- (uu4);
\draw[diredge, line width=1 pt] (uu5) -- (uu6);

\draw[line width=1 pt] (x1) -- (x2);
\draw[line width=1 pt] (x3) -- (x4);
\draw[line width=1 pt] (x5) -- (x6);

\draw[line width=1 pt] (y1) -- (y2);
\draw[line width=1 pt] (y3) -- (y4);
\draw[line width=1 pt] (y5) -- (y6);

\draw[line width=1 pt] (xx1) -- (xx2);
\draw[line width=1 pt] (xx3) -- (xx4);
\draw[line width=1 pt] (xx5) -- (xx6);

\draw[line width=1 pt] (yy1) -- (yy2);
\draw[line width=1 pt] (yy3) -- (yy4);
\draw[line width=1 pt] (yy5) -- (yy6);

\draw[diredge, line width=1 pt] (v2)-- (u1);
\draw[diredge, line width=1 pt] (v4)-- (u3);
\draw[diredge, line width=1 pt] (v6)-- (u5);

\draw[diredge, line width=1 pt] (y1) -- (x2);
\draw[diredge, line width=1 pt]  (y3) -- (x4);
\draw[diredge, line width=1 pt]  (y5) --(x6);

\draw[diredge, line width=1 pt] (vv2) -- (uu1);
\draw[diredge, line width=1 pt] (vv4) -- (uu3);
\draw[diredge, line width=1 pt] (vv6) -- (uu5);

\draw[diredge, line width=1 pt] (yy1) -- (xx2);
\draw[diredge, line width=1 pt] (yy3) -- (xx4);
\draw[diredge, line width=1 pt] (yy5) -- (xx6);

\draw[line width=1 pt] (\ys,\ys) -- (\ys+\x,\ys) -- (\ys+\x,\ys+\x) -- (\ys,\ys+\x)-- (\ys,\ys);

    \foreach \j in {1,...,6}
    {
        \draw[line width=1 pt] (v\j) -- (u\j);
        \draw[diredge, line width=1 pt] (x\j) -- (y\j);
        \draw[line width=1 pt] (vv\j) -- (uu\j);
        \draw[diredge, line width=1 pt] (xx\j) -- (yy\j);
    }
    \node[] at ($0.5*(uu6)+0.5*(v1)$) {\Large{\textbf{$7$}}};

    \node[] at ($0.42*(u1)+0.08*(v1)+0.42*(u2)+0.08*(v2)$) {\Large{\textbf{$1$}}};
    
    \node[] at ($0.42*(u3)+0.08*(v3)+0.42*(u4)+0.08*(v4)$) {\Large{\textbf{$2$}}};
    
    \node[] at ($0.42*(u5)+0.08*(v5)+0.42*(u6)+0.08*(v6)$) {\Large{\textbf{$3$}}};
    
    \node[] at ($0.42*(uu1)+0.08*(vv1)+0.42*(uu2)+0.08*(vv2)$) {\Large{\textbf{$11$}}};
    
    \node[] at ($0.42*(uu3)+0.08*(vv3)+0.42*(uu4)+0.08*(vv4)$) {\Large{\textbf{$12$}}};
    
    \node[] at ($0.42*(uu5)+0.08*(vv5)+0.42*(uu6)+0.08*(vv6)$) {\Large{\textbf{$13$}}};
    
    \node[] at ($0.42*(yy2)+0.08*(yy1)+0.42*(xx2)+0.08*(xx1)$) {\Large{\textbf{$8$}}};

    \node[] at ($0.42*(yy4)+0.08*(yy3)+0.42*(xx4)+0.08*(xx3)$) {\Large{\textbf{$9$}}};

    \node[] at ($0.42*(yy6)+0.08*(yy5)+0.42*(xx6)+0.08*(xx5)$) {\Large{\textbf{$10$}}};

    \node[] at ($0.42*(y2)+0.08*(y1)+0.42*(x2)+0.08*(x1)$) {\Large{\textbf{$4$}}};

    \node[] at ($0.42*(y4)+0.08*(y3)+0.42*(x4)+0.08*(x3)$) {\Large{\textbf{$5$}}};

    \node[] at ($0.42*(y6)+0.08*(y5)+0.42*(x6)+0.08*(x5)$) {\Large{\textbf{$6$}}};

\end{tikzpicture}
\end{minipage}\hfill
\begin{minipage}[b]{0.48\textwidth}\centering
\raisebox{0.658 cm}
{
\begin{tikzpicture}[scale=0.55]

\def \x {1.5}
\def \dx {0.25}
\def \y {4*\x+3*\dx}
\def \ys {3*\x+3*\dx}
\def \epsi {0.1}

\defPt{0}{0}{v1}
\defPt{\x}{0}{v2}
\defPt{\x+\dx}{0}{v3}
\defPt{2*\x+\dx}{0}{v4}
\defPt{2*\x+2*\dx}{0}{v5}
\defPt{3*\x+2*\dx}{0}{v6}
\defPt{0}{\y}{u1}
\defPt{\x}{\y}{u2}
\defPt{\x+\dx}{\y}{u3}
\defPt{2*\x+\dx}{\y}{u4}
\defPt{2*\x+2*\dx}{\y}{u5}
\defPt{3*\x+2*\dx}{\y}{u6}

\draw [thick, black, decorate,decoration={brace,amplitude=6pt,  mirror}] ($(v1)+(0,-0.1)$) -- ($(v6)+(0,-0.1)$) node[black,midway,yshift=-0.6cm] { $I$};
\draw [thick, black, decorate,decoration={brace,amplitude=6pt,  mirror}] ($(4*\x+4*\dx,0)+(0,-0.1)$) -- ($(x2)+(0,-0.1)$) node[black,midway,yshift=-0.6cm] { $K$};
\draw [thick, black, decorate,decoration={brace,amplitude=6pt,  mirror}] ($(x1)+(0,-0.1)$) -- ($(4*\x+3*\dx,0)+(0,-0.1)$) node[black,midway,yshift=-0.6cm] { $J$};

\draw [thick, black, decorate,decoration={brace,amplitude=6pt,  mirror}] ($(x2)+(0.1,0)$) -- ($(y6)+(0.1,0)$) node[black,midway,xshift=0.6cm] { $I$};

\draw [thick, black, decorate,decoration={brace,amplitude=6pt,  mirror}] ($(y6)+(0.1,\dx)$) -- ($(y6)+(0.1,\x+\dx)$) node[black,midway,xshift=0.6cm] { $J$};

\draw [thick, black, decorate,decoration={brace,amplitude=6pt,  mirror}] ($(y6)+(0.1,\x+2*\dx)$) -- ($(uu6)+(0.1,0)$) node[black,midway,xshift=0.6cm] { $K$};

\defPt{3*\x+3*\dx}{0}{x1}
\defPt{3*\x+3*\dx+\y}{0}{x2}
\defPt{3*\x+3*\dx}{\x}{y1}
\defPt{3*\x+3*\dx+\y}{\x}{y2}
\defPt{3*\x+3*\dx}{\x+\dx}{x3}
\defPt{3*\x+3*\dx+\y}{\x+\dx}{x4}
\defPt{3*\x+3*\dx}{2*\x+\dx}{y3}
\defPt{3*\x+3*\dx+\y}{2*\x+\dx}{y4}
\defPt{3*\x+3*\dx}{2*\x+2*\dx}{x5}
\defPt{3*\x+3*\dx+\y}{2*\x+2*\dx}{x6}
\defPt{3*\x+3*\dx}{3*\x+2*\dx}{y5}
\defPt{3*\x+3*\dx+\y}{3*\x+2*\dx}{y6}

\defPt{\y+\dx}{\ys}{vv1}
\defPt{\x+\y+\dx}{\ys}{vv2}
\defPt{\x+\dx+\y+\dx}{\ys}{vv3}
\defPt{2*\x+\dx+\y+\dx}{\ys}{vv4}
\defPt{2*\x+2*\dx+\y+\dx}{\ys}{vv5}
\defPt{3*\x+2*\dx+\y+\dx}{\ys}{vv6}
\defPt{\y+\dx}{\y+\ys}{uu1}
\defPt{\x+\y+\dx}{\y+\ys}{uu2}
\defPt{\x+\dx+\y+\dx}{\y+\ys}{uu3}
\defPt{2*\x+\dx+\y+\dx}{\y+\ys}{uu4}
\defPt{2*\x+2*\dx+\y+\dx}{\y+\ys}{uu5}
\defPt{3*\x+2*\dx+\y+\dx}{\y+\ys}{uu6}

\defPt{0}{\y+\dx}{xx1}
\defPt{0\y}{\y+\dx}{xx2}
\defPt{0}{\x+\y+\dx}{yy1}
\defPt{\y}{\x+\y+\dx}{yy2}
\defPt{0}{\x+\dx+\y+\dx}{xx3}
\defPt{\y}{\x+\dx+\y+\dx}{xx4}
\defPt{0}{2*\x+\dx+\y+\dx}{yy3}
\defPt{\y}{2*\x+\dx+\y+\dx}{yy4}
\defPt{0}{2*\x+2*\dx+\y+\dx}{xx5}
\defPt{\y}{2*\x+2*\dx+\y+\dx}{xx6}
\defPt{0}{3*\x+2*\dx+\y+\dx}{yy5}
\defPt{\y}{3*\x+2*\dx+\y+\dx}{yy6}

\draw[line width=1 pt] (u1) -- (u6) -- (v6) -- (v1) -- (u1);
\draw[line width=1 pt] (uu1) -- (uu6) -- (vv6) -- (vv1) -- (uu1);
\draw[line width=1 pt] (x1) -- (x2) -- (y6) -- (y5) -- (x1);
\draw[line width=1 pt] (xx1) -- (xx2) -- (yy6) -- (yy5) -- (xx1);

\draw[line width=1 pt] (\ys,\ys) -- (\ys+\x,\ys) -- (\ys+\x,\ys+\x) -- (\ys,\ys+\x)-- (\ys,\ys);

    \node[] at ($0.5*(u1)+0.5*(v6)$) {\huge{\textbf{$A_1$}}};
    \node[] at ($0.5*(uu1)+0.5*(vv6)$) {\huge{\textbf{$A_5$}}};
    \node[] at ($0.5*(\ys,\ys)+0.5*(\ys+\x,\ys+\x)$) {\Large{\textbf{$A_3$}}};    
    \node[] at ($0.5*(x1)+0.5*(y6)$) {\huge{\textbf{$A_2$}}};
    \node[] at ($0.5*(xx1)+0.5*(yy6)$) {\huge{\textbf{$A_4$}}};

\end{tikzpicture}}
\end{minipage}\hfill

\caption{Directed arrows point towards a larger value of $f$ and indicate whether the given subarray is of type (1,2)  or type (2,1) (as in \Cref{fig:types}). Numbers denote the relative order of subarrays.}
\label{fig:f}
\end{figure}

Suppose towards a contradiction that $[N]^2$ contains an $n\times n$ subgrid $\vect{L}=L_1 \times L_2$ such that $f\rvert_{\vect{L}}$ is of type (1,2) or (2,1). Letting $I=[(n-1)(n-2)]$, $J=[(n-1)(n-2)+1, (n-1)^2]$, $K=[(n-1)^2+1,N]$, we define 
\begin{align*}
a = |L_1\cap I|, \enskip & b=|L_1\cap J|, \enskip c=|L_1\cap K|\\
x = |L_2\cap I|, \enskip & y=|L_2\cap J|, \enskip z=|L_2\cap K|.
\end{align*}
We will obtain various inequalities involving $a,b,c,x,y,z$, and eventually reach a contradiction. Since $\vect{L}$ has size $n\times n$ and $|J|=n-1$  we obtain
\begin{equation}\label{eq:1}
a+b+c=x+y+z=n,\enskip 0 \le a,c,x,z \le n \enskip \text{and} \enskip 0 \le b,y \le n-1.
\end{equation}

We divide our analysis into two cases.

\hspace*{0.3 cm} {\bf Case 1:} $\vect{L}$ is of type $(2,1)$.

We have the following series of observations  
\begin{align}
{\rm (G2)} \Rightarrow & \quad a \le n-1 \text{ or } x+y \le 1,  \label{eq:2}\\
{\rm (G2)} \Rightarrow & \quad c \le n-1 \text{ or } y+z \le 1, \label{eq:3}\\
{\rm (H2)} \Rightarrow & \quad a+b \le 1 \text{ or } z \le n-2,  \label{eq:4}\\
{\rm (H2)} \Rightarrow & \quad b+c \le 1  \text{ or } x \le n-2,  \label{eq:5}\\
{\rm (P1)} \Rightarrow & \quad a = 0 \text{ or } b+c=0 \text{ or } x+y \le 1,   \label{eq:6}\\
{\rm (P2)} \Rightarrow & \quad a+b=0 \text{ or } c = 0 \text{ or } y+z \le 1. \label{eq:7} 
\end{align}

Observe that 
\begin{align}
    a &= 0 \text{ or } x+y \le 1, \label{eq:8} \\
    c &= 0 \text{ or } y+z \le 1. \label{eq:9}
\end{align}
To see \eqref{eq:8} if $b+c=0$ then by \eqref{eq:1} we have $a=n$ which according to \eqref{eq:2} implies $x+y \le 1$ so \eqref{eq:6} implies \eqref{eq:8}. Similarly, to see \eqref{eq:9} if $a+b=0$ then by \eqref{eq:1} we have $c=n$ which according to \eqref{eq:3} implies $y+z \le 1$ so \eqref{eq:7} implies \eqref{eq:9}.

To complete our analysis of Case 1, we show $a=c=0$, giving a contradiction to \eqref{eq:1}. Suppose to the contrary that $a\ge 1$. Then $x+y \le 1$ by \eqref{eq:8}, and so $z \ge n-1$ by \eqref{eq:1}, which according to \eqref{eq:4} shows $a+b \le 1$. Hence 
\[
c=n-(a+b)\ge n-1 \ge 1 \enskip \text{and} \enskip y+z \ge z \ge n-1 \ge 2,
\]
giving a contradiction to \eqref{eq:9}. It remains to show that $c=0$. If we instead have $c \ge 1$, then \eqref{eq:9} implies $y+z \le 1$, and so $x \ge n-1$ by \eqref{eq:1}, which by \eqref{eq:5} implies $b+c \le 1$. Thus
\[
a=n-(b+c)\ge n-1 \ge 1 \enskip \text{and} \enskip x+y \ge x \ge n-1 \ge 2,
\]
contradicting \eqref{eq:8} and completing the proof in this case.

\hspace*{0.3cm} {\bf Case 2:} $\vect{L}$ is of type (1,2).

The analysis of this case is very similar to that of Case 1. We first have the following observations
\begin{align}
{\rm (G1)} \Rightarrow & \quad a \le n-2 \text{ or } x+y \le 1,  \label{eq:10}\\
{\rm (G1)} \Rightarrow & \quad c \le n-2 \text{ or } y+z \le 1, \label{eq:11}\\
{\rm (H1)} \Rightarrow & \quad a+b \le 1 \text{ or } z \le n-1,  \label{eq:12}\\
{\rm (H1)} \Rightarrow & \quad b+c \le 1  \text{ or } x \le n-1,  \label{eq:13}\\
{\rm (P4)} \Rightarrow & \quad a+b \le 1 \text{ or } x+y=0 \text{ or } z =0,   \label{eq:14}\\
{\rm (P3)} \Rightarrow & \quad b+c \le 1 \text{ or } x = 0 \text{ or } y+z=0. \label{eq:15} 
\end{align}
We next show
\begin{align}
    a+b \le 1 & \text{ or } z = 0, \label{eq:16}\\
    b+c \le 1 & \text{ or } x = 0. \label{eq:17}
\end{align}

To see \eqref{eq:16} if $x+y=0$ then by \eqref{eq:1} we have $z=n$ which according to \eqref{eq:12} implies $a+b \le 1$ so \eqref{eq:14} implies \eqref{eq:16}. Similarly, to see \eqref{eq:17} if $y+z=0$ then by \eqref{eq:1} we have $x=n$ which according to \eqref{eq:13} implies $b+c \le 1$ so \eqref{eq:15} implies \eqref{eq:17}.

Finally, we show $x=z=0$, giving a contradiction to \eqref{eq:1}. Suppose $x\ge 1$. Then $b+c \le 1$ by \eqref{eq:17}, and so \eqref{eq:1} gives $a \ge n-1$, which by \eqref{eq:10} forces $x+y \le 1$. From this we conclude 
\[
a+b \ge a \ge n-1 \ge 2 \enskip \text{and} \enskip z=n-(x+y)\ge n-1 \ge 1  ,
\]
giving a contradiction to \eqref{eq:16}. To show $z=0$, we suppose $z \ge 1$. Then \eqref{eq:16} gives $a+b \le 1$, and so \eqref{eq:1} implies $c \ge n-1$, which by \eqref{eq:11} results in $y+z \le 1$. Thus
\[
b+c \ge c \ge n-1 \ge 2 \enskip \text{and} \enskip x=n-(y+z)\ge n-1 \ge 1,
\]
contradicting \eqref{eq:17}. This completes our proof of \Cref{thm:mon-to-lex} (i).
\end{proof}

The example used above is partially motivated by certain examples considered in \cite{piercing}. This paper also considers higher dimensional examples which may be of some use in higher dimensional instances of our problem as well, but only in terms of optimizing the dependency on $d$.

\subsection{High-dimensional case}
In this subsection we prove \Cref{thm:mon-to-lex} (ii). Our first ingredient in the proof will be the following lemma.

\begin{lem}[Dominant coordinate]
\label{lem:reduction}
Let $d,m,t \ge 2$ be integers, and let $f$ be an increasing array indexed by $[d^2mt]^d$. Then, there exist a dimension $i \in [d]$, sets $B_1,\ldots, B_{i-1},B_{i+1},\ldots, B_d \subseteq [d^2mt]$ of size $m+1$ and $t$ subgrids $\vect{A}_h:=B_1 \times\ldots \times B_{i-1} \times \{h\} \times B_{i+1} \times \ldots \times B_d$ such that
$f\rvert_{\vect{A}_h}< f\rvert_{\vect{A}_{h'}}$ whenever $h<h'$. 
\end{lem}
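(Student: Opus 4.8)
The plan is to repeatedly apply the Erd\H os--Szekeres theorem, once for each dimension, peeling off one coordinate at a time and at each stage recording whether the successive ``slices'' in that coordinate can be sorted into a monotone (in fact increasing, after a reflection) chain. Since the array $f$ is increasing, along any single axis the values only grow, but the issue is how two \emph{parallel hyperplanes} compare as a whole; the key point is that for a fixed choice of a coordinate $i$ and fixed sets $B_1,\dots,B_{i-1},B_{i+1},\dots,B_d$, the relation ``$f\rvert_{\vect A_h}<f\rvert_{\vect A_{h'}}$ for every entry'' is exactly the statement that the minimum of slice $h'$ exceeds the maximum of slice $h$. So I will look for a long chain of indices $h$ in some coordinate such that these min/max values are strictly increasing.

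First I would set up the iteration. Write $n_0=d^2mt$. The idea is: using Erd\H os--Szekeres in coordinate $1$ we can find a subset $C_1\subseteq[n_0]$ with $|C_1|\ge\sqrt{n_0}$ along which the one-dimensional sequence obtained by restricting $f$ to a single line is monotone; but we need this simultaneously for \emph{all} lines, which forces us to instead reason about the whole array and apply a pigeonhole/product-Ramsey argument, or alternatively to apply Erd\H os--Szekeres to a well-chosen statistic. Concretely, for each coordinate $j$ and each ``candidate'' hyperplane position, consider the value $\min f$ over that hyperplane (with the other coordinates ranging over the current surviving set). Applying Erd\H os--Szekeres to the sequence of these minima over $j=1$ yields a subset of size $\ge\sqrt{n_0}$ on which the minima are monotone; if they are increasing we have made progress toward coordinate $1$ being dominant, and if some coordinate's minima come out decreasing we reflect. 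Iterating this over all $d$ coordinates, and observing that after $d$ halvings of the exponent we retain a set of size $\ge n_0^{1/2^{d}}$, we would like to conclude one coordinate admits an increasing chain of length $t$ while in the remaining $d-1$ coordinates we still have sets of size $\ge m+1$.

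The counting that makes this work: we need $n_0^{1/2^{d}}$, or more precisely the quantity surviving all the reductions, to be at least $\max(m+1,t)$ in the appropriate places; the choice $n_0=d^2mt$ is generous enough that after the logarithmic losses the bound $|B_j|\ge m+1$ and chain length $\ge t$ both hold — this is a routine but slightly fiddly bookkeeping step, and I would organize it so that each coordinate ``uses up'' a factor of roughly $d$ and the product $mt$ is split as $m$ (for the spatial sets) times $t$ (for the chain). The cleanest route is probably a single induction on $d$: reduce from dimension $d$ to dimension $d-1$ by first using a product-Ramsey-type pigeonhole (as in \Cref{lem:grid-2d}) to fix the position of a large increasing sub-slice in the last coordinate, then recurse; the base case $d=1$ is just Erd\H os--Szekeres.

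The main obstacle I expect is the simultaneity issue: Erd\H os--Szekeres gives a monotone subsequence for one line, but ``$f\rvert_{\vect A_h}<f\rvert_{\vect A_{h'}}$'' is a statement about all entries of two hyperplanes at once, so a naive one-line application is not enough — one must either argue via the $\min$/$\max$ statistics (cheap, but one must be careful that ``$\min$ of $h'$ $>$ $\max$ of $h$'' really is implied, which needs the sets $B_j$ to be common to both slices, hence the pigeonhole must be done \emph{before} fixing the chain) or pay for a product-Ramsey step. Getting the order of operations right — fix the common spatial sets $B_j$ across all slices first, then extract the monotone chain in the dominant coordinate — is the crux, and the rest is a controlled accounting of how the sizes shrink.
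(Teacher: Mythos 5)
Your reformulation of the goal is correct: the desired conclusion is exactly that the minimum of $f$ over $\vect{A}_{h'}$ exceeds the maximum over $\vect{A}_h$, and you are right that fixing the common sets $B_1,\dots,B_{d}$ \emph{before} extracting the chain is the crux. But the mechanism you propose for doing this cannot reach the bound $d^2mt$. Iterated Erd\H{o}s--Szekeres replaces the surviving side length $n$ by roughly $\sqrt{n}$ at each of $d$ stages, so starting from $N=d^2mt$ you would be left with about $(d^2mt)^{1/2^d}$, which for even moderate $d$ is far below $\max(m+1,t)$; to make that approach close you would need $N$ on the order of $(mt)^{2^d}$, not polynomial in $d,m,t$. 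The product--Ramsey alternative via \Cref{lem:grid-2d} is no better, since that lemma already costs a factor $\binom{nk}{n}$, i.e.\ an exponential, per colour-fixing step. There is also a conceptual issue with applying Erd\H{o}s--Szekeres to the sequence of slice minima: because $f$ is increasing, the minima (and maxima) along any axis are automatically monotone, so Erd\H{o}s--Szekeres gives you nothing new; what you need is \emph{separation} (min of a later slice exceeding max of an earlier one), and monotonicity of the two statistics separately does not yield that.

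The paper's argument is of a different character and is essentially loss-free in the parameters. It tiles $[d^2mt]^d$ by translates of $[dm]^d$, and inside each translate it looks only at $d$ carefully chosen ``anchor'' points $\vect{x}_1,\dots,\vect{x}_d$ together with $d$ associated subgrids $\vect{C}_1,\dots,\vect{C}_d$ chosen so that $\vect{x}_{i+1}$ is the coordinatewise maximum of $\vect{C}_i$ and $\vect{x}_i-(m-1)d\vect{e}_i$ is its coordinatewise minimum. Each translate is coloured by the index $i$ of the anchor at which $f$ is largest --- only $d$ colours. A single pigeonhole then finds a column in direction $i$ containing $t$ translates of colour $i$; because all these translates lie in one column, the resulting subgrids $\vect{u}_j+\vect{C}_i$ share the same $B_1,\dots,B_{i-1},B_{i+1},\dots,B_d$ automatically, with no Ramsey step at all. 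The separation between consecutive slabs then comes for free from monotonicity of $f$, the coarse spacing $dm$ between consecutive translates in coordinate $i$, and the fact that the colour choice pins down $f(\vect{u}_j+\vect{x}_i)$ as a separator between $\max f(\vect{A}_j)$ and $\min f(\vect{A}_k)$. So the missing idea is precisely this geometric anchor construction: replacing the ``compare all entries'' difficulty by a comparison of just $d$ points per cell, paid for by a single factor-$d$ pigeonhole rather than by iterated Ramsey-type arguments.
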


One should think of this lemma as saying that there is a dimension $i$ such that one can find a ``stack'' of subgrids appearing at the same location along the remaining $d-1$ dimensions and different positions along dimension $i$, which can be thought of as heights of the subgrids. Furthermore, the subgrids are not much smaller than the initial one in the remaining dimensions and our array is always bigger on a higher subgrid. Our proof of this lemma borrows some ideas of \cite{FG93}.
\begin{proof}
The proof of the lemma is in some sense a high-dimensional generalisation of the argument used to prove \Cref{thm:F-G}. We split the grid $[d^2mt]^d$ along each coordinate into $td$ intervals of equal size, obtaining a partition of $[d^2mt]^d$ into translates of $[dm]^d$
\begin{equation}\label{eq:translates}
[d^2mt]^d=\bigcup_{\vect{u}\in \vect{T}}\left(\vect{u}+[dm]^d\right),    
\end{equation}
where $\vect{T}=\{0,dm,2dm,\ldots,(dt-1)dm\}^d$. The reason behind considering this is that we are now going to compare values taken by the array on certain points in  
$\vect{u}+[dm]^d$ for each $\vect{u} \in \vect{T}$, and once we find the one with the largest entry, the fact that points of $\vect{T}$ are suitably spaced apart will allow us to get information about the ordering of a relatively large $(d-1)$-dimensional subarray.
For each $i \in [d]$, the aforementioned points are given by $\vect{x}_i$ and the subarrays by $\vect{C}_i$ below.
\begin{align*}
\vect{x}_i& =((i-1)m, (i-2)m,\ldots,m,dm,(d-1)m,\ldots,im) \in [dm]^d,\\    
\vect{C}_i& =[(i-1)m,im]\times \ldots \times [m,2m] \times \{m\} \times [(d-1)m,dm]\times \ldots \times [im,(i+1)m] \subset [dm]^d.    
\end{align*}
Notice that $\vect{x}_{i+1}$ has every coordinate larger than $\vect{x}_{i}$, except $i$-th, here $\vect{x}_{d+1}:=\vect{x}_1$. Notice further that $\vect{x}_{i+1} \in \vect{C}_i$ is larger in every coordinate than any other point of $\vect{C}_i$. Similarly $\vect{x_i}$ with its $i$-th coordinate reduced by $(d-1)m$ is the point of $C_i$ which is smaller than any other in every coordinate. In other words with respect to the componentwise order of $[dm]^d$:
\begin{equation}
\label{eq:min-max}
\max \vect{C}_i=\vect{x_{i+1}}, \enskip \text{and} \enskip \min \vect{C}_{i}=\vect{x_i}-(d-1)m\vect{e}_i,    
\end{equation}
where $\vect{e}_i$ stands for the $i$-th unit vector $(0,\ldots, \underset{i\text{-th}}{\underset{\uparrow}{1}}, \ldots,0)$ .

\begin{figure}[h]
\begin{minipage}{0.33\textwidth}
    \centering
    \begin{tikzpicture}[scale=0.55]

\defPt{0}{0}{x1}
\defPt{5}{0}{x2}
\defPt{0}{5}{y1}
\defPt{5}{5}{y2}

\draw[line width=1 pt, blue!30] (x1) -- (x2);
\draw[line width=1 pt, blue] (y1) -- (y2);

\draw[line width=1 pt, red!30] (x1) -- (y1);
\draw[line width=1 pt, red] (x2) -- (y2);

    \draw[] (x2) \bvx;
    \draw[] (y1) \bvx;
        \node[] at ($(x2)+(0,-0.7)$) {{\textbf{$\vect{x_1}=(2m,m)$}}};
        \node[] at ($(y1)+(0,0.7)$) {{\textbf{$\vect{x_2}=(m,2m)$}}};

\node[] at (0,-0.7) {{$(m,m)$}};
\node[] at (5,5.7) {{$(2m,2m)$}};

\end{tikzpicture}
\end{minipage}
\begin{minipage}{0.66\textwidth}
    \centering
    \begin{tikzpicture}[scale=0.55]

\defPt{-8.13}{-3.15}{x1}
\defPt{-2.8229}{-4.5719}{x2}
\defPt{-1.73}{-1.564}{x3}
\defPt{3.574}{-2.985}{x4}

\defPt{-8.13}{3.94}{y1}
\defPt{-2.8229}{2.52}{y2}
\defPt{-1.734}{5.53}{y3}
\defPt{3.574}{4.10578}{y4}

\draw[line width=1 pt] (x1) -- (x2) -- (x4);

\draw[line width=1 pt, dotted] (x4) -- (x3) -- (x1);

\draw[line width=1 pt] (y1) -- (y2) -- (y4) -- (y3) -- (y1);

    \path [fill=blue!70, opacity=.5] ($0.5*(x4)+0.5*(x2)$) to  ($0.5*(x4)+0.5*(y2)$) to ($0.5*(x4)+0.5*(y4)$) to (x4) to ($0.5*(x4)+0.5*(x2)$);

    \path [fill=blue!30, opacity=.5] ($0.5*(x3)+0.5*(x1)$) to  ($0.5*(x3)+0.5*(y1)$) to ($0.5*(x3)+0.5*(y3)$) to (x3) to ($0.5*(x3)+0.5*(x1)$);
    
    \path [fill=black!60!green, opacity=0.5] ($0.5*(y3)+0.5*(x3)$) to  ($0.5*(x3)+0.5*(y4)$) to ($0.5*(y3)+0.5*(y4)$) to (y3) to ($0.5*(y3)+0.5*(x3)$);

    \path [fill=green!20, opacity=.7] ($0.5*(y1)+0.5*(x1)$) to  ($0.5*(x1)+0.5*(y2)$) to ($0.5*(y1)+0.5*(y2)$) to (y1) to ($0.5*(y1)+0.5*(x1)$);
    
    \path [fill=red!30, opacity=.5] ($0.5*(x1)+0.5*(x2)$) to  ($0.5*(x2)+0.5*(x3)$) to ($0.5*(x4)+0.5*(x2)$) to (x2) to ($0.5*(x1)+0.5*(x2)$);

    \path [fill=black!10!red!90, opacity=.5] ($0.5*(y1)+0.5*(y2)$) to  ($0.5*(y2)+0.5*(y3)$) to ($0.5*(y4)+0.5*(y2)$) to (y2) to ($0.5*(y1)+0.5*(y2)$);
    
    \foreach \j in {1,2,4}
    {
        \draw[line width=1 pt] (x\j) -- (y\j);
    }
    \draw[line width=1 pt,dotted] (x3) -- (y3);
    
    \draw[line width=1 pt]
    ($0.5*(x4)+0.5*(x2)$) to  ($0.5*(x4)+0.5*(y2)$) to ($0.5*(x4)+0.5*(y4)$);
    
    \draw[line width=1 pt, dotted]
    ($0.5*(x3)+0.5*(x1)$) to  ($0.5*(x3)+0.5*(y1)$) to ($0.5*(x3)+0.5*(y3)$);
    
    \draw[line width=1 pt, dotted]
    ($0.5*(y3)+0.5*(x3)$) to  ($0.5*(x3)+0.5*(y4)$) to ($0.5*(y3)+0.5*(y4)$);
    
    \draw[line width=1 pt]
    ($0.5*(y1)+0.5*(x1)$) to  ($0.5*(x1)+0.5*(y2)$) to ($0.5*(y1)+0.5*(y2)$);
    
    \draw[line width=1 pt, dotted]
    ($0.5*(x1)+0.5*(x2)$) to  ($0.5*(x2)+0.5*(x3)$) to ($0.5*(x4)+0.5*(x2)$);
    
    \draw[line width=1 pt]
    ($0.5*(y1)+0.5*(y2)$) to  ($0.5*(y2)+0.5*(y3)$) to ($0.5*(y4)+0.5*(y2)$);
    
    \draw[] ($0.5*(x2)+0.5*(x4)$) \bvx;
        \node[] at ($0.5*(x2)+0.5*(x4)-(-0.2,0.6)$) {{\textbf{$\vect{x_1}$}}};
    \draw[] ($0.5*(y1)+0.5*(y2)$) \bvx;
        \node[] at ($0.5*(y1)+0.5*(y2)-(-0.5,0.5)$) {{\textbf{$\vect{x_3}$}}};
    \draw[] ($0.5*(x3)+0.5*(y3)$) \bvx;
        \node[] at ($0.5*(x3)+0.5*(y3)+(0.5,-0.5)$) {{\textbf{$\vect{x_2}$}}};
\node[] at ($(x1)+(0,-0.75)$) {{$(m,m,m)$}};
\node[] at (7.5,2) {{$\vect{x_1}=(3m,2m,m)$}};
\node[] at (7.5,1) {{$\vect{x_2}=(m,3m,2m)$}};
\node[] at (7.5,0) {{$\vect{x_3}=(2m,m,3m)$}};
\node[] at ($(y4)+(1.5,0.6)$) {{$(3m,3m,3m)$}};   
\end{tikzpicture}
    \caption{Lightly shaded $(d-1)$-dimensional regions in the figure denote $\vect{C_i}$'s with their maximum point being $\vect{x_{i+1}}$. Depending on which of the $\vect{x_i}$'s has largest value of $f$ one of these $\vect{C_i}$'s has value of $f$ on $\vect{x_i}$ smaller than that of $f$ on the minimal point of a translate of $\vect{C_i}$ (strongly shaded region of the same colour) at $\vect{x_{i}}$.}
    \label{fig:ap}
\end{minipage}
\end{figure}
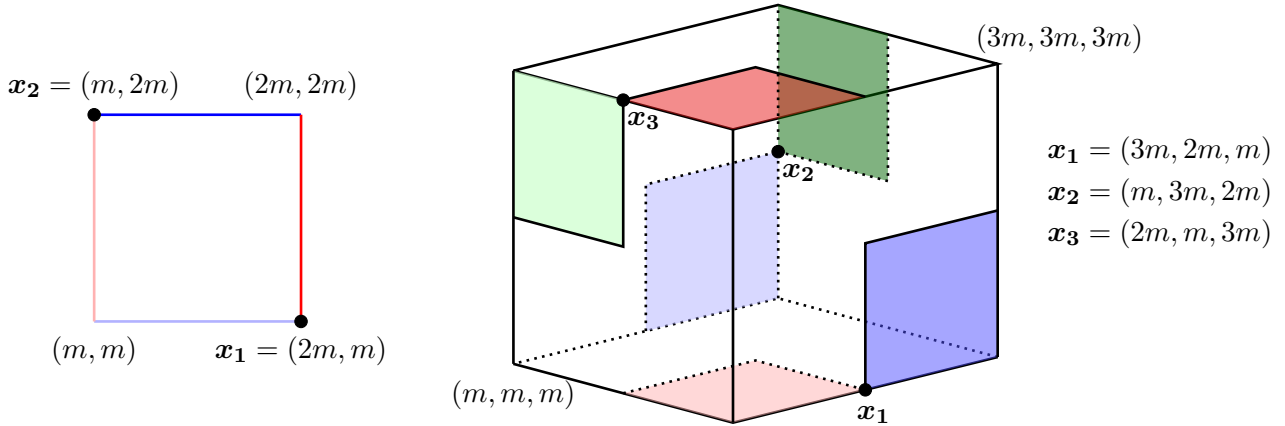

Now consider a colouring $\chi\colon \vect{T} \rightarrow [d]$ given by: 
\[
\chi(\vect{u})=i \enskip \text{if and only if} \enskip f(\vect{u}+\vect{x}_i)=\max \{ f(\vect{u}+\vect{x}_1),\ldots,f(\vect{u}+\vect{x}_d)\}.
\]
By pigeonhole principle, there is a colour $i\in [d]$ which appears at least $(td)^d/d$ times. This implies that the grid $\vect{T}$ contains a column in the direction of the $i$-th coordinate for which at least $t$ vertices of this column have colour $i$. We list those  vertices of $\vect{T}$ from smallest to largest with respect to their $i$-th coordinates: $\vect{u}_1,\ldots,\vect{u}_t$.

We show that the grids $\vect{A}_1=\vect{u}_1+\vect{C}_i,\ldots,\vect{A}_t=\vect{u}_t+\vect{C}_i$ have the desired properties. Indeed, \eqref{eq:translates} implies that $\vect{A}_1,\ldots,\vect{A}_t$ are subgrids of $[d^2mt]^t$. Since we have chosen $\vect{u}_j$'s as in the same column in the direction of the $i$-th coordinate, all of them have the same coordinates in all other dimensions. This implies that there are $d-1$ sets $B_1,\ldots,B_{i-1},B_{i+1},\ldots,B_d$ and $t$ ``heights'' $h_1,\ldots,h_t$ such that $\vect{A}_j$ can be written as $B_1 \times\ldots \times B_{i-1} \times \{h_j\} \times B_{i+1} \times \ldots \times B_d$ for every $1\le j \le t$. Since $\vect{C}_i$ has size $(m+1) \times \ldots \times (m+1)$ each $B_j$ has size $m+1.$ Finally, since $f$ is increasing, for $1\le j<k \le t$ we have 
\begin{align*}
\max f(\vect{A}_j)&=f(\vect{u}_j+\vect{x}_{i+1})\\
 &<f(\vect{u}_j+\vect{x}_i) \\
 &<f(\vect{u}_k-(d-1)m\vect{e}_i+\vect{x}_i)\\
&=\min f(\vect{A}_k).
\end{align*}
The first equality follows since \eqref{eq:min-max} implies $\vect{u}_j+\vect{x}_{i+1}$ is the largest point of $\vect{A}_j$ so since $f$ is increasing we conclude that $f$ is maximised over $\vect{A}_j$ at $\vect{u}_j+\vect{x}_{i+1}$. Similarly, we get the last equality as well.
The first inequality follows since $\chi(\vect{u}_j)=i$. The second inequality follows since $j<k$ implies 
the $i$-th coordinate of $\vect{u}_j$ is smaller than that of $\vect{u}_k$ (since 
$\vect{u}_j$ and $\vect{u}_k$ belong to the same column along $i$-th dimension and since we named them according to their $i$-th coordinate) by at least $dm$ (since $\vect{u}_j,\vect{u}_k\in \vect{T})$. This finishes our proof of \Cref{lem:reduction}.
\end{proof}
\vspace{-0.5cm}
This lemma provides us with a stack of subgrids $\vect{A}_1,\ldots, \vect{A}_t$ on which $f$ is increasing in dimension $i$. It is natural to try to iterate and now apply the lemma within each $\vect{A}_j$, but notice that we do not only want to find a lexicographic subgrid in some $n$ different $\vect{A}_j$'s, but they also have to appear at the same positions in each of them. One can now apply a Ramsey result for $O(2^{\ML_{d-1}(n)})$ colours to ensure the found subgrids appear at the same locations. Consequently, this approach gives at best $\ML_d(n) \le \towr_{d-1}(O_d(n))$. Fishburn and Graham \cite[Section 4]{FG93} follow a similar approach and their arguments hit the same barrier in terms of the bounds they can obtain.

We take a different approach in order to prove \Cref{thm:mon-to-lex} (ii).
\begin{thm}
For $d\ge 3$, we have $\ML_d(n)\le 2^{(c_d+o(1))n^{d-2}}$, where $c_d=\frac{1}{2}(d-1)!$.
\end{thm}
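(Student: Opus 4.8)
The plan is to pass to increasing arrays via \Cref{lem:mon-to-inc}, reducing the task to: every increasing $d$-dimensional array of side $\ML_d(n)$ contains an $n\times\dots\times n$ subarray of type $\sigma$ for some $\sigma\in\mathfrak{S}_d$. I would then induct on $d$, using \Cref{lem:reduction} to peel off one coordinate per step, with the base case $d=3$ treated separately.

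\textbf{Inductive step $d\mapsto d+1$, $d\ge 3$.} Set $M=\ML_d(n)$ and $t=n\cdot d!\cdot\binom{M}{n}^{d}$, and apply \Cref{lem:reduction} with $m=M-1$ to an increasing array on $[(d+1)^2(M-1)t]^{d+1}$. This produces a dimension $i$ and $t$ copies $\vect{A}_{h_1},\dots,\vect{A}_{h_t}$ of $[M]^{d}$, all sitting at the same position in the coordinates other than $i$, and totally ordered in the sense that every value of $f$ on $\vect{A}_{h_a}$ lies below every value on $\vect{A}_{h_b}$ whenever $a<b$. Each $\vect{A}_{h_a}$ is an increasing array of side $\ML_d(n)$, hence contains an $n\times\dots\times n$ subgrid on which $f$ is of some type $\sigma_a\in\mathfrak{S}_d$; there are at most $d!\binom{M}{n}^{d}$ possibilities for this subgrid together with its type, so by pigeonhole at least $n$ of the layers share the same one. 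Stacking those $n$ layers along dimension $i$ yields an $n\times\dots\times n$ subarray that is lexicographic with $i$ as top priority followed by the common $\sigma$. Taking logarithms gives $\log_2\ML_{d+1}(n)\le(dn+1)\log_2\ML_d(n)+O_d(\log n)$, so the ansatz $\log_2\ML_d(n)=(c_d+o(1))n^{d-2}$ propagates with $c_{d+1}=d\,c_d$; together with $c_3=1$ this is exactly $c_d=\tfrac12(d-1)!$.

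\textbf{Base case $d=3$.} One cannot begin the induction at $d=2$, because $\log_2\ML_2(n)=\Theta(\log n)$ is not linear in $n$, so the factor $\binom{\ML_2(n)}{n}^2=2^{\Theta(n\log n)}$ would destroy the bound. Instead I would apply \Cref{lem:reduction} to $[9mt]^3$ with $m$ polynomial in $n$ and $t=2^{(1+o(1))n}$, obtaining a dimension $i$ and $t$ ordered planar layers at a common position, each increasing on $[m+1]^2$, and then locate inside each layer a planar lexicographic subgrid whose description costs only $(1+o(1))n$ bits, so that pigeonholing over the $t$ layers still leaves $n$ with identical data. The economical description is available because, for an increasing planar array $g$, a subgrid $I\times J$ with $I=\{r_1<\dots<r_n\}$ is of type $(1,2)$ if and only if $g(r_a,\max J)<g(r_{a+1},\min J)$ for all $a$, so that only the extreme columns of $J$ matter: one restricts to a sparse sample grid of $O(n)$ columns (spaced at least $n-1$ apart) and polynomially many rows, searches for the required chain among the sample rows using an Erd\H{o}s-Szekeres/Dilworth argument (or, if it fails in every layer, derives a contradiction with $m$ being large), and records only the pair of sample columns and the set of sample rows used, and symmetrically for type $(2,1)$. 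The number of outcomes is $2^{(1+o(1))n}$, so the pigeonhole leaves $n$ aligned layers, which stack to an $n\times n\times n$ subarray; hence $\ML_3(n)\le 9mt=2^{(1+o(1))n}$, i.e.\ $c_3=1$.

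\textbf{Main obstacle.} The inductive step is essentially bookkeeping once \Cref{lem:reduction} is available; the real work is the base case, and within it the sharp constant $c_3=1$. A direct use of the Fishburn--Graham planar bound would encode the lexicographic subgrid as an $(n-1)$-element subset of a roughly $2n$-element sample set, costing $\binom{2n}{n}\approx 2^{2n}$ outcomes and hence only $c_3=2$; bringing this down to $2^{(1+o(1))n}$ via the ``only the extreme columns matter'' encoding, and verifying that the resulting dichotomy really closes off in every layer, is the crux, along with tracking the binomial asymptotics in the pigeonholing at each level of the induction.
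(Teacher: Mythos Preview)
Your inductive step is exactly the paper's: apply \Cref{lem:reduction} with $m=\ML_d(n)$ and $t=n\cdot d!\binom{m}{n}^d$, find a lex subgrid of some type in each of the $t$ layers, pigeonhole over position and type, and stack; the recursion $\log_2\ML_{d+1}(n)\le (dn+1)\log_2\ML_d(n)+O_d(\log n)$ indeed yields $c_{d+1}=d\,c_d$. You are also right that one cannot start the induction at $d=2$. The gap is at $d=3$, precisely where you flag it, and the fix you propose does not close it. The ``only the extreme columns matter'' observation is correct but beside the point: in the Fishburn--Graham output one coordinate of the lex subgrid is \emph{already} an interval, and the entire cost sits in the other coordinate, which must be recorded as an $n$-element subset of the sample set. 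With a polynomial-size sample set that costs $\binom{\mathrm{poly}(n)}{n}=2^{\Theta(n\log n)}$ outcomes (your ``set of sample rows from polynomially many''), and with an $O(n)$-size sample set it costs $\binom{O(n)}{n}=2^{(c+o(1))n}$ with $c\ge 2$; either way you do not reach $c_3=1$.

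The paper obtains $c_3=1$ not via a sharper encoding of a single subgrid but via a multiplicity argument. With $m=2n^3$, partition each layer into $n\times n$ blocks and two-colour each block by a corner comparison; every layer then has some block-row with at least $m/(2n)=n^2$ red blocks (or a block-column with that many blue ones). After paying a factor $O(m/n)$ to pin down this block-row across layers, each surviving layer contributes not one but $\binom{n^2}{n}$ red $n$-tuples, whereas the universe of $n$-tuples within a fixed block-row has size only $\binom{2n^2}{n}$. By double counting, some $n$-tuple is shared by at least $\tfrac{t}{O(m/n)}\cdot\binom{n^2}{n}\big/\binom{2n^2}{n}$ layers, and since $\binom{2n^2}{n}\big/\binom{n^2}{n}=2^{(1+o(1))n}$ one may take $t=2^{(1+o(1))n}$ and still secure $n$ aligned layers. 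This averaging over the abundance of good tuples inside each layer is the missing ingredient in your sketch.
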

\begin{proof}
We prove the statement by induction on $d$.

\hspace{0.3 cm} {\bf The base case:} $d=3$.

Let $m=2n^3$, $t=2m \binom{m/n}{n}/\binom{m/(2n)}{n}=2^{n+o(n)}$, and $N=d^2mt=2^{n+o(n)}$. Consider an increasing array $f:[N]^3\rightarrow \bR$. By \Cref{lem:reduction}, we can assume w.l.o.g. that $[N]^3$ contains a stack of $t$ subgrids $\vect{A}_1=  B_1\times B_2 \times \{h_1\}, \ldots,\vect{A}_t=B_1\times B_2\times \{h_t\}$ of size $m\times m \times 1$ such that $h_1<h_2<\ldots<h_t$ and $f\rvert_{\vect{A}_1}<f\rvert_{\vect{A}_2}<\ldots<f\rvert_{\vect{A}_t}$. We drop the third dimension from now on, and think of $\vect{A}_i$'s as 2-dimensional grids.

We split each $\vect{A}_i$ into $\left(\frac{m}{n}\right)^2$ smaller subgrids of size $n \times n$. Colour each such smaller subgrid red if its topmost leftmost corner is smaller than bottommost rightmost, and blue otherwise.
As in the proof of \Cref{thm:F-G}, any $n$ red subgrids in the same row of $\vect{A}_i$ give rise to an $n \times n$ subgrid of type $(1,2)$. If we further manage to find $n$ layers of the stack, each having such a sequence of the same $n$ red $n\times n$ subgrids  we obtain an $n\times n \times n$ subgrid of type $(3,1,2)$ (using the property that $h_1<h_2<\ldots<h_t$ and $f\rvert_{\vect{A}_1}<f\rvert_{\vect{A}_2}<\ldots<f\rvert_{\vect{A}_t}$). Similarly, if we find $n$ layers each having the same sequence of $n$ blue $n \times n$ subgrids in the same column we find an $n \times n \times n$ subgrid of type $(3,2,1)$. 

 By pigeonhole principle, each layer $\vect{A}_i$ of the stack has a row with $\frac{m}{2n}$ red subgrids or a column with at least $\frac{m}{2n}$ blue subgrids. This row or column can be chosen in $2\cdot \frac{m}{n}$ ways, so there are $\frac{nt}{2m}$ layers having in the same row/column $\frac{m}{2n}$ red/blue subgrids. Let's say w.l.o.g. that it is the first row.  Then, the first row of such a layer contains at least $\binom{m/(2n)}{n}$ tuples of $n$ red subgrids. By pigeonhole principle, there are at least 
 \[
 \frac{\frac{nt}{2m}\binom{m/(2n)}{n}}{\binom{m/n}{n}} \ge n
 \]
 layers having the same red $n$-tuples, as desired.

\hspace{0.3 cm} {\bf The induction step:} suppose $d\ge 4$ and that the lemma holds for $d-1$.

It is easy to see that the desired estimate $\ML_d(n) \le 2^{(c_d+o(1))n^{d-2}}$ follows from the induction hypothesis and the following recursive bound 
\[
\ML_d(n) \le d^{d}\ML_{d-1}(n)^{(d-1)n+1} \enskip \text{for every} \enskip d\ge 4 \enskip \text{and} \enskip n \ge 2. 
\]
We now prove this inequality. Let $m=\ML_{d-1}(n)$, $t=(d-1)!n\binom{m}{n}^{d-1}$, and $N=d^2mt \le d^d\ML_{d-1}(n)^{(d-1)n+1}$. 
Consider an increasing array $f:[N]^d\rightarrow \bR$. 

By \Cref{lem:reduction}, we can assume w.l.o.g. that $[N]^d$ contains a stack of $t$ subgrids $\vect{A}_1=  B_1\times \ldots \times B_{d-1}\times \{h_1\}, \ldots,\vect{A}_t=B_1\times \ldots \times B_{d-1}\times \{h_t\}$ of size $m\times \ldots \times m \times 1$ such that $h_1<h_2<\ldots<h_t$ and $f\rvert_{\vect{A}_1}<f\rvert_{\vect{A}_2}<\ldots<f\rvert_{\vect{A}_t}$.

Given $i\in [t]$, as $m=\ML_{d-1}(n)$, one can find a permutation $\sigma \in \mathfrak{S}_{d-1}$ and a subgrid $\vect{A}'_i \subset \vect{A}_i$ of size $n\times \ldots \times n \times 1$ such that $f\rvert_{\vect{A}'_i}$ is of type $\sigma$. Since $\frac{t}{(d-1)!\binom{m}{n}^{d-1}}=n$, the pigeonhole principle implies the existence of a permutation $\sigma \in \mathfrak{S}_{d-1}$, an $n\times \ldots \times n$ subgrid $B'_1\times \ldots \times B'_d$ of $B_1\times \ldots \times B_d$, and $n$ layers $1\le i_1<\ldots<i_n \le t$ such that for every $k\in [n]$, we have that $\vect{A}'_{i_k}=B'_1\times \ldots \times B'_{d-1}\times \{h_{i_k}\}$, and that the restriction of $f$ to $\vect{A}'_{i_k}$ is of type $\sigma$. As $f\rvert_{\vect{A}_{i_1}}<\ldots < f\rvert_{\vect{A}_{i_n}}$,
the restriction of $f$ to $B'_1\times \ldots \times B'_{d-1}\times \{h_{i_1},\ldots,h_{i_n}\}$ is an $n\times \ldots \times n$ array of type $(d,\sigma)$. This shows $\ML_d(n) \le N \le d^d\ML_{d-1}(n)^{(d-1)n+1}$, as required.
\end{proof}

\section{Concluding Remarks}
We obtain a major improvement on best known upper bounds for $M_d(n)$. However, our bounds are still off from the best known lower bound of $M_d(n) \ge n^{(1+o(1))n^{d-1}/d}$ due to Fisburn and Graham \cite[Theorem 3]{FG93}.
Perhaps the most interesting open question regarding $M_d(n)$ is to determine the behaviour in 2 dimensions.
\begin{qn}\label{qn1}
What is the behaviour of $M_2(n)$? Is it closer to exponential or to double exponential in $n$?
\end{qn}

It is natural to ask whether our argument used to get 
a double exponential bound in the monotone case in 3 dimensions (\Cref{thm:monotone-3d}) generalises to higher dimensions. Unfortunately, the natural generalisation of our approach to more dimensions gives a bound of the form $M_d(n) \le \towr_{\floor{d/2}+2}(O_d(n))$ which has a tower of height growing with $d$. However, this does still imply a better bound than \Cref{thm:monotone-anyd} in 4 and 5 dimensions. 
The main issue preventing us from extending our argument to more dimensions is the fact that it seems hard to obtain asymmetric results which would allow us to find a monotone subarray with exponential size in at least $2$ dimensions. For example, if we could find an $n \times 2^n \times 2^{n^2}$ monotone subarray within any array of size $2^{2^{O(n^2)}}\times 2^{2^{O(n^2)}} \times 2^{2^{O(n^2)}}$ we would obtain a double exponential bound
$M_4(n) \le 2^{2^{O(n^3)}}$. However, if we knew how to do this then by considering an array which is always increasing in the first dimension and has the same but arbitrary ordering for each 2-dimensional subarray with fixed value in the first dimension, we would also be able to get a better than double exponential bound in the 2-dimensional case, which leads us back to \Cref{qn1}.
Our better bounds in 3, 4 and 5 dimensions make it seem unlikely that a triple exponential is ever needed.

\begin{qn}
For $d \ge 4$ is $M_d(n)$ bounded from above by a double exponential in $n^{d-1}$?
\end{qn}

For the problem of determining $\ML_d(n)$, we completely settle the 2-dimensional case and give exponential upper bounds for $d \ge 3$. The best known lower bound $\ML_d(n) \ge (n-1)^d$, also due to Fishburn and Graham \cite{FG93} is still only polynomial. We find the 3-dimensional case particularly interesting since via \Cref{lem:reduction} it reduces to the following nice problem. 
\begin{qn} What is the smallest $N$ such that given $N$ increasing arrays of size $N \times N$ one can find an $n\times n$ lexicographic array of the same type appearing in the same positions in at least $n$ of the arrays?
\end{qn}
In particular, is this $N$ bounded by a polynomial in $n$ or is it exponential in $n$.

The study of $M_d(n),\ML_d(n)$ and $L_d(n)$ while interesting in its own right is also closely related to various other interesting problems. We present just a few here.
\subsection{Long common monotone subsequence}
The problem of estimating $M_2(n)$ is closely related to the longest common monotone subsequence problem. A \textit{common monotone subsequence} of two permutations $\pi, \sigma \in \mathfrak{S}_N$ is a set $I \subseteq [N]$ such that the restrictions of  $\pi$ and $\sigma$ to $I$ are either both increasing or both decreasing. A common monotone subsequence of more than two permutations is defined analogously. Given positive integers $t,k$ and $N$, let $\hbox{LMS}(t,k,N)$ denote the maximum $\ell$ such that any size-$k$ multisubset $P\subseteq \mathfrak{S}_N$ contains a size-$t$ multisubset $P'$ such that the length of the longest common monotone subsequence of $P'$ is at least $\ell$. 

We now describe the connection between $M_2(n)$ and $\LMS(t,k,N)$. Let $f\colon [N]^2\rightarrow \bR$ be a 2-dimensional array. Similarly to the first part of the proof of \Cref{thm:monotone-asym-2d}, we can show that $[N]^2$ contains a subgrid $R\times C$ of size $(\log N)^{1-o(1)}\times N^{1-o(1)}$ such that either $f\rvert_{R\times C}$ is increasing in each column, or $f\rvert_{R\times C}$ is decreasing in each column. For each $r\in R$, the restriction of $f$ to the row $\{r\}\times C$ induces a permutation $\pi_r$ of $C$, since $f$ is assumed to be injective. (Note that $\pi_r$'s are not necessarily distinct.) It is not hard to see that if among $(\log N)^{1-o(1)}$ permutations $\{\pi_r:r\in R\}$ of $C$ there are $n$ permutations whose longest common mononotone subsequence has length at least $n$, then $f\rvert_{R\times C}$ contains an $n\times n$ monotone subarray. Therefore every $N\times N$ array contains a monotone subarray of size $n\times n$, where $n$ is the maximum $t\in \N$ such that $\LMS(t,(\log N)^{1-o(1)},N^{1-o(1)})$ is greater than or equal to $t$. By an iterative application of the Erd\H{o}s-Szekeres theorem, one can take $n= (1/2-o(1))\log_2 \log_2 N$, or equivalently $N=2^{2^{(2+o(1))n}}$.   

The problem of determining the parameter $\LMS(t,k,N)$ for other ranges of $t$ and $k$ is also very appealing. For example, it would be interesting to have a good estimate for $\LMS(t,k,N)$ when $t$ is fixed and $k$ grows to infinity with $N$. We refer the reader to \cite{BBN09,BN08,BZ16} for some related results in this direction. 

\subsection{Ramsey type problems for vertex-ordered graphs}

One can place the problems we have studied in this paper under the framework of (vertex-)ordered Ramsey numbers. For simplicity of presentation we choose to illustrate this through the 2-dimensional monotone subarray problem. Let $K^{(3)}_{N,N}$ be the $3$-uniform hypergraph with vertex set $A\cup B$, where $A$ and $B$ are two copies of $[N]$, and edge set consisting of all those triples which intersect both $A$ and $B$. 
Given an array $f\colon [N]^2\rightarrow \bR$, we can associate to $f$ an edge-colouring $\chi$ of $K^{(3)}_{N,N}$ with two colours red and blue. For $i\in A$ and $j,j'\in B$ with $j<j'$, let $\chi(i,j,j')=\text{red}$ if and only if $f(i,j)<f(i,j')$. Similarly, for $j \in B$ and $i,i' \in A$ with $i<i'$, we assign colour red to $(i,i',j)$ if and only if $f(i,j)<f(i',j)$. The following simple observation connects the multidimensional Erd\H{o}s-Szekeres world with the ordered Ramsey world.
\begin{obs}
Suppose there are two size-$n$ subsets $\{a_1< \ldots < a_n\} \subseteq A$, $\{b_1<\ldots<b_n\} \subseteq B$ s.t. \begin{itemize}
\item $\{(a_i,b_j,b_{j+1}): i \in [n], j \in [n-1]\}$ is monochromatic under $\chi$,
\item $\{(a_i,a_{i+1},b_j):j \in [n], i \in [n-1]\}$ is monochromatic under $\chi$.
\end{itemize}
Then the restriction of $f$ to $\{a_1,\ldots,a_n\} \times \{b_1,\ldots,b_n\}$ is monotone.
\end{obs}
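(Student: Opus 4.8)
The plan is to unwind the definition of the colouring $\chi$ and invoke transitivity of the order on $\bR$; there is no real difficulty, only bookkeeping.

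First I would treat the second coordinate. The hypothesis gives that the family $\{(a_i,b_j,b_{j+1}): i\in[n],\, j\in[n-1]\}$ is monochromatic; by symmetry assume it is red (the blue case is identical with all inequalities reversed). By the definition of $\chi$ on a triple with one vertex in $A$ and two in $B$, this means $f(a_i,b_j)<f(a_i,b_{j+1})$ for every $i\in[n]$ and every $j\in[n-1]$. Fixing $i$ and chaining these inequalities, I get $f(a_i,b_j)<f(a_i,b_{j'})$ whenever $j<j'$; that is, $y\mapsto f(a_i,y)$ is increasing on $\{b_1,\ldots,b_n\}$. Since this holds for every $i$, the restriction of $f$ to $\{a_1,\ldots,a_n\}\times\{b_1,\ldots,b_n\}$ satisfies alternative (i) of the definition of a monotone array in the second dimension (and alternative (ii) had the triples been blue).

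Then I would repeat this argument verbatim for the first coordinate using the second monochromatic family: monochromaticity of $\{(a_i,a_{i+1},b_j): i\in[n-1],\, j\in[n]\}$ together with the definition of $\chi$ on a triple with two vertices in $A$ gives either $f(a_i,b_j)<f(a_{i+1},b_j)$ for all $i,j$ (red) or the reverse for all $i,j$ (blue); transitivity upgrades this to $f(a_i,b_j)<f(a_{i'},b_j)$ (respectively $>$) for all $i<i'$ and every $j$, which is exactly alternative (i) (respectively (ii)) in the first dimension. Having verified one of the two alternatives in each of the two dimensions, the restriction is monotone by definition, as claimed.

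The only point requiring any care is matching each of the two monochromatic families to the correct coordinate direction, and the (trivial) observation that a sequence increasing between every pair of consecutive terms is increasing between every pair of terms; I would state the latter explicitly as the one line of actual content.
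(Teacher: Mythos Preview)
Your argument is correct and is exactly the intended one: unwind the definition of $\chi$ on each type of triple, then use transitivity along consecutive indices to upgrade from ``adjacent'' comparisons to arbitrary ones in each coordinate. The paper in fact states this result as an observation without supplying any proof, so there is nothing further to compare; your write-up simply spells out what the paper leaves implicit.
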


Now define $\hbox{OR}(n)$ to be the smallest $N$ such that in every red-blue colouring of the edges of $K^{(3)}_{N,N}$ we can always find two size-$n$ subsets $\{a_1< \ldots < a_n\} \subseteq A$ and $\{b_1<\ldots<b_n\} \subseteq B$ with the aforementioned properties. From the observation, we know $M_2(n) \le \hbox{OR}(n)$. A closer inspection of our proof of the inequality $M_2(n) \le 2^{2^{(2+o(1))n}}$ reveals that it actually gives $\hbox{OR}(n) \le 2^{2^{(2+o(1))n}}$. Thus it is natural to ask whether $M_2(n)$ and $\hbox{OR}(n)$ have the same order of magnitude.

\subsection{Canonical orderings of discrete structures}

An ordering of the edges of a (vertex-ordered) $d$-graph $G$ with $V(G) \subset \bZ$ is {\em lex-monotone} if one can find a permutation $\sigma \in \mathfrak{S}_d$ and a sign vector $\vect{s} \in \{-1,1\}^d$ such that the edges $(a_1,\ldots,a_d)$ of $G$ with $a_1<\ldots<a_d$ are ordered according to the lexicographical order of the tuple $(s_{\sigma(1)}a_{\sigma(1)},\ldots, s_{\sigma(d)}a_{\sigma(d)})$. An old result of Leeb and Pr{\"o}mel (see \cite[Theorem 2.8]{Nes84}) says that for every $d,n\in \N$ there is a positive integer $\LP_d(n)$ such that every 
edge-ordering of a (vertex-ordered) complete $d$-graph on $\LP_d(n)$ vertices contains a copy of the complete $d$-graph on $n$ vertices whose edges induce a lex-monotone ordering. \Cref{thm:lexicographic} in our paper can be viewed naturally as a $d$-partite version (with a better bound) of this result. It would be interesting to know if our approach can lead to an improvement on the upper bound $\LP_d(n) \le \towr_{2d}(O_d(n))$ for $d\ge 2$, due to Ne\v{s}et\v{r}il and R{\"o}dl \cite[Theorem 14]{NR17}. For other interesting results in edge-ordered Ramsey numbers, we refer the reader to \cite{BV19,FL19}.

\Cref{thm:lexicographic} is also related to the work of Ne\v{s}et\v{r}il, Pr{\"o}mel, R{\"o}dl and B. Voigt \cite{NPRV85} on linear orders of the combinatorial cube $[k]^n$ when $k$ is fixed and $n$ is large. For simplified presentations of this work, see \cite{Pro88,BS19}.


\end{document}